\documentclass[11pt,reqno]{amsart}
\usepackage{preprintStyle}
\usepackage{textcase}

\newcommand{\imagUnit}{i}

\newcommand{\N}{\ensuremath\mathbb{N}}

\newcommand{\R}{\ensuremath\mathbb{R}}
\newcommand{\C}{\ensuremath\mathbb{C}}

\newcommand{\Hinf}{\mathcal{H}_\infty}
\newcommand{\Htwo}{\mathcal{H}_2}

\newcommand{\Ltwo}{L_2}
\newcommand{\Lfour}{L_4}
\newcommand{\LF}{L_F}
\newcommand{\Lone}{L_1}
\newcommand{\Linfty}{L_\infty}

\newcommand{\T}{\ensuremath\mathsf{T}}

\newcommand{\adj}{\ensuremath^{\ast}}

\newcommand{\ds}{\,\mathrm{d}s}
\newcommand{\dt}{\,\mathrm{d}t}

\newcommand{\dtau}{\,\mathrm{d}\tau}
\newcommand{\dsigma}{\,\mathrm{d}\sigma}

\DeclareMathOperator{\real}{Re}

\DeclareMathOperator{\tr}{tr}
\DeclareMathOperator{\trace}{trace}

\newcommand{\mexp}[1]{\mathrm{e}^{#1}}

\newcommand{\calA}{\mathcal{A}}

\newcommand{\calL}{\mathcal{L}}

\newcommand{\calQ}{\mathcal{Q}}

\newcommand{\calY}{\mathcal{Y}}

\newcommand{\system}{\Sigma}
\newcommand{\state}{x}

\newcommand{\stateDim}{n}
\newcommand{\systemMat}[1]{\mathbf{#1}}

\newcommand{\kernel}{\mathbf{h}}
\newcommand{\transferFunction}{\mathbf{G}}
\newcommand{\transferFunctionSecond}{\widetilde{\mathbf{G}}}

\newcommand{\inpVar}{u}
\newcommand{\inpVarFreq}{U}

\newcommand{\inpVarDim}{m}
\newcommand{\inputDim}{\inpVarDim}

\newcommand{\outVar}{y}
\newcommand{\outVarBiVar}{\widetilde{\outVar}}
\newcommand{\outVarFreq}{Y}
\newcommand{\outVarFreqBiVar}{\widetilde{\outVarFreq}}

\newcommand{\outputVar}{\outVar}
\newcommand{\outVarDim}{p}

\newcommand{\I}{\systemMat{I}}

\newcommand{\A}{\systemMat{A}}
\newcommand{\B}{\systemMat{B}}
\newcommand{\Cm}{\systemMat{C}}
\newcommand{\D}{\systemMat{D}}
\newcommand{\M}{\systemMat{M}}
\newcommand{\Mvec}{\boldsymbol{\mathcal{M}}}
\newcommand{\Pm}{\systemMat{P}}
\newcommand{\Pvec}{\boldsymbol{\mathcal{P}}}
\newcommand{\Rm}{\systemMat{R}}
\newcommand{\Rvec}{\boldsymbol{\mathcal{R}}}

\newcommand{\cholesky}{\systemMat{L}}
\newcommand{\unitary}{\systemMat{\Lambda}}

\DeclareMathOperator{\vecc}{vec}

\DeclareMathOperator{\vtf}{vtf}

\definecolor{mycolor1}{rgb}{0.00000,0.44700,0.74100}
\definecolor{mycolor2}{rgb}{0.85000,0.32500,0.09800}
\definecolor{mycolor3}{rgb}{0.92900,0.69400,0.12500}
\definecolor{mycolor4}{rgb}{0.46600,0.67400,0.18800}
\definecolor{mycolor5}{rgb}{0.49400,0.18400,0.55600}

\definecolor{nicegreen}{rgb}{0.3,0.7,0.4}

\newcommand{\abbr}[1]{\textsf{#1}\xspace}

\newcommand{\LTI}{\abbr{LTI}}		
\newcommand{\LQO}{\abbr{LQO}}	

\newcommand{\BT}{\abbr{BT}} 		
\newcommand{\IRKA}{\abbr{IRKA}}		
\newcommand{\MOR}{\abbr{MOR}}		

\DeclareRobustCommand\trineg{%
\begin{tikzpicture}[baseline,scale=.07]
\draw (0,0) -- (4,0) -- (2,3.46410161514) -- cycle;
\draw [domain=1:3.4] plot(\x,{(5.6-3.46410161514*\x)/-2});
\end{tikzpicture}}

\newcommand{\vecNorm}[1]{\left\| #1\right\|}
\newcommand{\funNorm}[1]{\left|\!\left|\!\left| #1 \right|\!\right|\!\right|}
\newcommand{\freqSpacetwo   }[1]{\Ltwo  \left(\imagUnit\R,\, \R^{#1}  \right)}
\newcommand{\freqSpacetwoC  }[1]{\Ltwo  \left(\imagUnit\R,\, \C^{#1}  \right)}
\newcommand{\freqSpaceFC    }[1]{\LF    \left(\imagUnit\R,\, \C^{#1}  \right)}

\newcommand{\freqSpaceoneC   }[1]{\Lone  \left(\imagUnit\R,\, \C^{#1}  \right)}
\newcommand{\timeSpacetwo   }[1]{\Ltwo  \left([0, \infty), \, \R^{#1} \right)}
\newcommand{\timeSpacefour   }[1]{\Lfour  \left([0, \infty), \, \R^{#1} \right)}

\newcommand{\timeSpaceinf   }[1]{\Linfty\left([0, \infty), \, \R^{#1} \right)}

\newcommand{\intFreqSpace}{\int_{-\imagUnit \infty}^{\imagUnit \infty}}

\newcommand{\lyapM}{\calY}

\newcommand{\lyapBB}{\calQ}
\newcommand{\lyapBBAux}{\widehat{\lyapBB}}

\definecolor{kit-green}{RGB}{0, 150, 130}
\colorlet{kit-green100}{kit-green}
\colorlet{kit-green90}{kit-green!90!white}
\colorlet{kit-green80}{kit-green!80!white}
\colorlet{kit-green70}{kit-green!70!white}
\colorlet{kit-green60}{kit-green!60!white}
\colorlet{kit-green50}{kit-green!50!white}
\colorlet{kit-green40}{kit-green!40!white}
\colorlet{kit-green30}{kit-green!30!white}
\colorlet{kit-green25}{kit-green!25!white}
\colorlet{kit-green20}{kit-green!20!white}
\colorlet{kit-green15}{kit-green!15!white}
\colorlet{kit-green10}{kit-green!10!white}
\colorlet{kit-green5}{kit-green!5!white}

\definecolor{kit-blue}{RGB}{70, 100, 170}
\colorlet{kit-blue100}{kit-blue}
\colorlet{kit-blue90}{kit-blue!90!white}
\colorlet{kit-blue80}{kit-blue!80!white}
\colorlet{kit-blue70}{kit-blue!70!white}
\colorlet{kit-blue60}{kit-blue!60!white}
\colorlet{kit-blue50}{kit-blue!50!white}
\colorlet{kit-blue40}{kit-blue!40!white}
\colorlet{kit-blue30}{kit-blue!30!white}
\colorlet{kit-blue25}{kit-blue!25!white}
\colorlet{kit-blue20}{kit-blue!20!white}
\colorlet{kit-blue15}{kit-blue!15!white}
\colorlet{kit-blue10}{kit-blue!10!white}
\colorlet{kit-blue5}{kit-blue!5!white}

\definecolor{kit-royalblue}{RGB}{0, 45, 76}
\colorlet{kit-royalblue100}{kit-royalblue}
\colorlet{kit-royalblue90}{kit-royalblue!90!white}
\colorlet{kit-royalblue80}{kit-royalblue!80!white}
\colorlet{kit-royalblue70}{kit-royalblue!70!white}
\colorlet{kit-royalblue60}{kit-royalblue!60!white}
\colorlet{kit-royalblue50}{kit-royalblue!50!white}
\colorlet{kit-royalblue40}{kit-royalblue!40!white}
\colorlet{kit-royalblue30}{kit-royalblue!30!white}
\colorlet{kit-royalblue25}{kit-royalblue!25!white}
\colorlet{kit-royalblue20}{kit-royalblue!20!white}
\colorlet{kit-royalblue15}{kit-royalblue!15!white}
\colorlet{kit-royalblue10}{kit-royalblue!10!white}
\colorlet{kit-royalblue5}{kit-royalblue!5!white}

\definecolor{kit-iceblue100}{RGB}{30, 53, 69}
\definecolor{kit-iceblue70}{RGB}{68, 94, 111}
\definecolor{kit-iceblue50}{RGB}{168, 185, 196}
\definecolor{kit-iceblue30}{RGB}{218, 225, 230}

\definecolor{kit-red}{RGB}{162, 34, 35}
\colorlet{kit-red100}{kit-red}
\colorlet{kit-red90}{kit-red!90!white}
\colorlet{kit-red80}{kit-red!80!white}
\colorlet{kit-red70}{kit-red!70!white}
\colorlet{kit-red60}{kit-red!60!white}
\colorlet{kit-red50}{kit-red!50!white}
\colorlet{kit-red40}{kit-red!40!white}
\colorlet{kit-red30}{kit-red!30!white}
\colorlet{kit-red25}{kit-red!25!white}
\colorlet{kit-red20}{kit-red!20!white}
\colorlet{kit-red15}{kit-red!15!white}
\colorlet{kit-red10}{kit-red!10!white}
\colorlet{kit-red5}{kit-red!5!white}

\definecolor{kit-yellow}{RGB}{252, 229, 0}
\colorlet{kit-yellow100}{kit-yellow}
\colorlet{kit-yellow90}{kit-yellow!90!white}
\colorlet{kit-yellow80}{kit-yellow!80!white}
\colorlet{kit-yellow70}{kit-yellow!70!white}
\colorlet{kit-yellow60}{kit-yellow!60!white}
\colorlet{kit-yellow50}{kit-yellow!50!white}
\colorlet{kit-yellow40}{kit-yellow!40!white}
\colorlet{kit-yellow30}{kit-yellow!30!white}
\colorlet{kit-yellow25}{kit-yellow!25!white}
\colorlet{kit-yellow20}{kit-yellow!20!white}
\colorlet{kit-yellow15}{kit-yellow!15!white}
\colorlet{kit-yellow10}{kit-yellow!10!white}
\colorlet{kit-yellow5}{kit-yellow!5!white}

\definecolor{kit-orange}{RGB}{223, 155, 27}
\colorlet{kit-orange100}{kit-orange}
\colorlet{kit-orange90}{kit-orange!90!white}
\colorlet{kit-orange80}{kit-orange!80!white}
\colorlet{kit-orange70}{kit-orange!70!white}
\colorlet{kit-orange60}{kit-orange!60!white}
\colorlet{kit-orange50}{kit-orange!50!white}
\colorlet{kit-orange40}{kit-orange!40!white}
\colorlet{kit-orange30}{kit-orange!30!white}
\colorlet{kit-orange25}{kit-orange!25!white}
\colorlet{kit-orange20}{kit-orange!20!white}
\colorlet{kit-orange15}{kit-orange!15!white}
\colorlet{kit-orange10}{kit-orange!10!white}
\colorlet{kit-orange5}{kit-orange!5!white}

\definecolor{kit-lightgreen}{RGB}{140, 182, 60}
\colorlet{kit-lightgreen100}{kit-lightgreen}
\colorlet{kit-lightgreen90}{kit-lightgreen!90!white}
\colorlet{kit-lightgreen80}{kit-lightgreen!80!white}
\colorlet{kit-lightgreen70}{kit-lightgreen!70!white}
\colorlet{kit-lightgreen60}{kit-lightgreen!60!white}
\colorlet{kit-lightgreen50}{kit-lightgreen!50!white}
\colorlet{kit-lightgreen40}{kit-lightgreen!40!white}
\colorlet{kit-lightgreen30}{kit-lightgreen!30!white}
\colorlet{kit-lightgreen25}{kit-lightgreen!25!white}
\colorlet{kit-lightgreen20}{kit-lightgreen!20!white}
\colorlet{kit-lightgreen15}{kit-lightgreen!15!white}
\colorlet{kit-lightgreen10}{kit-lightgreen!10!white}
\colorlet{kit-lightgreen5}{kit-lightgreen!5!white}

\definecolor{kit-purple}{RGB}{163, 16, 124}
\colorlet{kit-purple100}{kit-purple}
\colorlet{kit-purple90}{kit-purple!90!white}
\colorlet{kit-purple80}{kit-purple!80!white}
\colorlet{kit-purple70}{kit-purple!70!white}
\colorlet{kit-purple60}{kit-purple!60!white}
\colorlet{kit-purple50}{kit-purple!50!white}
\colorlet{kit-purple40}{kit-purple!40!white}
\colorlet{kit-purple30}{kit-purple!30!white}
\colorlet{kit-purple25}{kit-purple!25!white}
\colorlet{kit-purple20}{kit-purple!20!white}
\colorlet{kit-purple15}{kit-purple!15!white}
\colorlet{kit-purple10}{kit-purple!10!white}
\colorlet{kit-purple5}{kit-purple!5!white}

\definecolor{kit-brown}{RGB}{167, 130, 46}
\colorlet{kit-brown100}{kit-brown}
\colorlet{kit-brown90}{kit-brown!90!white}
\colorlet{kit-brown80}{kit-brown!80!white}
\colorlet{kit-brown70}{kit-brown!70!white}
\colorlet{kit-brown60}{kit-brown!60!white}
\colorlet{kit-brown50}{kit-brown!50!white}
\colorlet{kit-brown40}{kit-brown!40!white}
\colorlet{kit-brown30}{kit-brown!30!white}
\colorlet{kit-brown25}{kit-brown!25!white}
\colorlet{kit-brown20}{kit-brown!20!white}
\colorlet{kit-brown15}{kit-brown!15!white}
\colorlet{kit-brown10}{kit-brown!10!white}
\colorlet{kit-brown5}{kit-brown!5!white}

\definecolor{kit-cyan}{RGB}{35, 161, 224}
\colorlet{kit-cyan100}{kit-cyan}
\colorlet{kit-cyan90}{kit-cyan!90!white}
\colorlet{kit-cyan80}{kit-cyan!80!white}
\colorlet{kit-cyan70}{kit-cyan!70!white}
\colorlet{kit-cyan60}{kit-cyan!60!white}
\colorlet{kit-cyan50}{kit-cyan!50!white}
\colorlet{kit-cyan40}{kit-cyan!40!white}
\colorlet{kit-cyan30}{kit-cyan!30!white}
\colorlet{kit-cyan25}{kit-cyan!25!white}
\colorlet{kit-cyan20}{kit-cyan!20!white}
\colorlet{kit-cyan15}{kit-cyan!15!white}
\colorlet{kit-cyan10}{kit-cyan!10!white}
\colorlet{kit-cyan5}{kit-cyan!5!white}

\definecolor{kit-gray}{RGB}{0, 0, 0}
\colorlet{kit-gray100}{kit-gray}
\colorlet{kit-gray90}{kit-gray!90!white}
\colorlet{kit-gray80}{kit-gray!80!white}
\colorlet{kit-gray70}{kit-gray!70!white}
\colorlet{kit-gray60}{kit-gray!60!white}
\colorlet{kit-gray50}{kit-gray!50!white}
\colorlet{kit-gray40}{kit-gray!40!white}
\colorlet{kit-gray30}{kit-gray!30!white}
\colorlet{kit-gray25}{kit-gray!25!white}
\colorlet{kit-gray20}{kit-gray!20!white}
\colorlet{kit-gray15}{kit-gray!15!white}
\colorlet{kit-gray10}{kit-gray!10!white}
\colorlet{kit-gray5}{kit-gray!5!white}

\newcommand{\removeAuthorData}[1]{#1}

\title[$\Ltwo$-$\Ltwo$-gain bounds for quadratic output systems]{$\Ltwo$-$\Ltwo$-gain bounds for quadratic output systems}

\removeAuthorData{
\author[Birgit Hillebrecht]{Birgit Hillebrecht \\ \protect\NoCaseChange{ORCID:\,0000-0001-5361-0505, Institute for Applied and Numerical Mathematics, Karlsruhe Institute of Technology, 76131 Karlsruhe, Germany, Email address: \texttt{birgit.hillebrecht@kit.edu}}}
}

\begin{document}

\begin{abstract}
  We derive an explicit bound for the $\Ltwo$-$\Ltwo$ gain of linear time-invariant systems whose output is a quadratic function of the state and the input. Such systems appear naturally in many areas, for example for port-Hamiltonian systems, optimal-control, and stochastic problems. In case the output is purely quadratic in the state, the bound equals the $\Ltwo$-norm of the bivariate transfer function evaluated along the anti-diagonal $\{(s,\,-s)\mid s\in\imagUnit\R\}$ of the $\imagUnit\R\times\imagUnit\R$ frequency domain. Further, we show how the bound can be computed by solving linear matrix equations. This result provides a practical tool for assessing and reducing quadratic-output models. 
\end{abstract}

\maketitle

\smallskip

\noindent \textbf{Keywords.} Linear quadratic-output systems, L2-L2-gain, model order reduction\\

\noindent \textbf{Mathematics subject classification.} 37M99, 65P99, 93A15, 93B15

\section{Introduction}

We investigate linear time-invariant systems with quadratic outputs (\LQO systems) of the form 
\begin{equation}\label{eq:base_ext}
  \system \; \left\lbrace \;\begin{aligned}
    \dot{\state}(t) &= \A \state(t) + \B \inpVar(t), \\
    \state(0) &=0,\\
    \outputVar(t) &= \Cm \state(t) + \D \inpVar(t)+ \Mvec (\state(t) \otimes \state(t))  + \Rvec (\inpVar(t) \otimes \state(t)) + \Pvec (\inpVar(t) \otimes \inpVar(t))  ,
  \end{aligned} \right.
\end{equation} 
with $\A \in \R^{\stateDim \times \stateDim}$ Hurwitz, $\B \in \R^{\stateDim \times \inputDim}, \Cm \in \R^{p \times n}, \D \in \R^{p \times m}, \Mvec \in \R^{\outVarDim \times \stateDim^2}, \Rvec \in \R^{p \times mn},$ and $\Pvec \in \R^{p\times m^2}$. For $t\ge 0$, the state $\state(t)\in \R^n$ and the output $\outVar(t)\in \R^p$ evolve depending on the input $\inpVar(t) \in \R^m$. We assume that the input $\inpVar$ lies in $\Ltwo(\R_+)$ and that $\state(0)=0$ without further mention. The quadratic and bilinear terms in the output $\outVar(t)$ in \eqref{eq:base_ext} extend the standard linear time-invariant (\LTI) systems to a mildly nonlinear form. Such quadratic outputs arise naturally in several application areas: In port-Hamiltonian systems they reflect the input-energy relation \cite{HolNSU25}, in linear-quadratic optimal control they are part of the optimization objective \cite{EngW08}, and in stochastic problems the variance is a quadratic function of the states \cite{HaaUW13}. 

Motivated by these applications and the relevance of $\Ltwo$-$\Ltwo$ gains for robust and optimal control \cite{Sch92,ZhoDG96} as well as for model order reduction (\MOR) \cite{BenSGQR21}, the central aim of this manuscript is to derive a bound for the $\Ltwo$-$\Ltwo$ gain of the system \eqref{eq:base_ext}. It is well-known that, for stable \LTI systems, the $\Ltwo$-$\Ltwo$ gain is bounded by the $\Hinf$-norm of the transfer function \cite{BenSGQR21}. Nevertheless, an analogous expression for \LQO systems is not yet known. In particular, it is unclear how to compute $c_{2} \ge 0$  such that
\begin{equation*}
  \funNorm{ \outVar }_{\timeSpacetwo{p}} \le c_{1} \funNorm{ \inpVar }_{\timeSpacetwo{m}} + c_{2} \funNorm{ \inpVar }_{\timeSpacetwo{m}}^2,
\end{equation*} 
where $c_1$ is the $\Hinf$-norm of the transfer function of the linear part of the output.

Previous studies derived computable $\Ltwo$-$\Linfty$-gain bounds 
\begin{equation*}
  \funNorm{ \outVar }_{\timeSpaceinf{p}} \le c_{1} \funNorm{ \inpVar }_{\timeSpacetwo{m}} + c_{2} \funNorm{ \inpVar }_{\timeSpacetwo{m}}^2
\end{equation*}
and associated \MOR methods. Specifically, a Gramian-based formulation and a balanced truncation (\BT) algorithm were presented in \cite{BenGD21}, and an adaptation of the iterative rational Krylov algorithm (\IRKA) in \cite{ReiGDG25}. A key observation in these works is that, rather than the supremum of the original output, one may consider the supremum of a bivariate output
\begin{equation} \label{eq::boundsups}
  \sup_{t \ge 0} \vecNorm{ \outVar(t) }_2 \le \sup_{t_1, t_2 \ge 0} \vecNorm{\outVarBiVar(t_1, t_2) }_2 ,
\end{equation}
where $\outVarBiVar(t, t) = \outVar(t)$ for every $t \ge0 $. The two outputs $\outVar$ and $ \outVarBiVar$ are generated by the same kernel 
\begin{equation}
\kernel(\tau_1, \tau_2) = \Mvec \left[\mexp{\A \tau_1}\B \otimes \mexp{\A \tau_2}\B \right]+ \Rvec \left[\mexp{\A \tau_1}\B \otimes \I \delta(\tau_2)\right] + \Pvec\delta(\tau_1)\delta(\tau_2)\label{eq:kernel}
\end{equation}
 but by different integral representations:
\begin{subequations}
  \begin{align}
    \outVar(t) &= \int_0^t \int_0^t \kernel(\tau_1, \tau_2) \left(\inpVar(t-\tau_1) \otimes \inpVar(t-\tau_2)\right) \dtau_1 \dtau_2, \quad \text{and} \label{eq:outunivar}\\
    \outVarBiVar(t_1, t_2) &= \int_0^{t_1} \int_0^{t_2} \kernel(\tau_1, \tau_2) \left(\inpVar(t_1-\tau_1) \otimes \inpVar(t_2-\tau_2)\right) \dtau_1 \dtau_2. \label{eq:outbivar}
  \end{align}
\end{subequations}
The idea in \cite{DiaHGA23,ReiGDG25} is to then apply the bivariate Laplace transform $\calL_2$ \cite{Deb15} to the kernel yielding the bivariate transfer function 
\begin{equation} \label{eq:tf_bivar}
  \widetilde \transferFunction(s_1, s_2) = \Mvec \left[(s_1 \I - \A)^{-1} \B \otimes (s_2\I - \A)^{-1} \B \right] + \Rvec\left[(s_1\I - \A)^{-1}\B \otimes \I \right] + \Pvec \in \C^{p \times m^2}.
\end{equation}
We denote the reconstruction of a full $k\times l$ matrix for $k,l \in \N$ from a vector in $\R^{kl}$ or $\C^{kl}$ by $\vtf_{k\times l}$ and introduce $\M_i \coloneqq \vtf_{n\times n}(\mathbf{e}_i \Mvec)$, $\Rm_i \coloneqq \vtf_{n \times m}(\mathbf{e}_i \Rvec)$, and $\Pm_i \coloneqq \vtf_{m \times m}(\mathbf{e}_i \Rvec)$ for all $i \in \{1,...,p\}$. Using this,we can write each of the $p$ rows of the bivariate transfer function as
\begin{align} 
  \widetilde \transferFunction_i(s_1, s_2) &\coloneqq \mathbf{e}_i \widetilde \transferFunction(s_1, s_2)\notag \\ &= \vecc\left(\B^\T (s_2\I - \A^\T)^{-1}\M_i (s_1 \I - \A)^{-1} \B + \Rm_i (s_1 \I - \A)^{-1} \B + \Pm_i\right)^\T.\label{eq:fullvectf}
\end{align}
Without loss of generality, the matrices $\M_i$ and $\Pm_i$ are assumed to be symmetric for the remainder of the manuscript. 

While the inequality \eqref{eq::boundsups} holds for the supremum of the outputs, an analogous bound for the $\Ltwo$-norm does not exist. To circumvent this, we apply an association transform \cite{CheC73} in order to derive a suitable univariate transfer function and output in the frequency domain from their bivariate correspondences. 
The association transform has been used for systems whose state equation contains bilinear and quadratic terms in \cite{ZhaW15}, but has not yet been leveraged for quadratic output systems.

The main contributions of this paper are grouped into two stages: In the first stage, we consider systems with $\Mvec\neq 0$ but $\Cm=0, \D = 0, \Rvec=0$ and $\Pvec=0$ and show in \Cref{thm:central} that the $\Ltwo$-$\Ltwo$-gain bound can be computed using a suitable $\Ltwo$-norm in the frequency space of the bivariate transfer function. This  $\Ltwo$-formulation leads to a corresponding inner product definition (see \Cref{def:innerprod}) and to a clear expression for the difference between the outputs of two \LQO systems, which is relevant in \MOR. We then show in \Cref{thm:lyap_formulation} that the inner product can be evaluated by solving a set of Lyapunov and Sylvester equations. In the second stage, we investigate the bilinear and quadratic throughput terms in \Cref{thm:onlyR} and \Cref{sec:throughput} to finally summarize the results for general systems including bilinear terms as well as the known results for \LTI systems in \Cref{thm:full}. The paper concludes with a brief numerical example in \Cref{sec:example} and an outlook in \Cref{sec:discussion}.

\textbf{Notation.} From now on we follow the convention that bivariate auxiliary functions carry a
tilde $\tilde{\cdot}$, that frequency-domain quantities are written with uppercase letters, and that time-domain quantities are written with lowercase letters.
Further, we denote vector and matrix norms by $\vecNorm{\cdot}$, exemplarily, the Frobenius norm is referred to as $\vecNorm{\cdot}_F$ and the 2-norm as $\vecNorm{\cdot}_2$, while referring to function norms by $\funNorm{\cdot}$. We denote the reconstruction of a full $k\times l$ matrix for $k,l \in \N$ from a vector in $\R^{kl}$ or $\C^{kl}$ by $\vtf_{k\times l}$, where we drop the index if it is clear from context. The inverse operation, the vectorization, is denoted by $\vecc(\cdot)$. The $i$-th unit vector in $\R^l$ for $l\in \{p, m\}$ is denoted by $\mathbf{e}_i$.

\section{$\Ltwo$-$\Ltwo$-gain bound for the quadratic dependency on the state}

In this section, we consider the subclass of systems \eqref{eq:base_ext} whose output depends solely quadratically on the state, i.e., 
\begin{equation}\label{eq:base}
  \system_{\Mvec} \; \left\lbrace \;\begin{aligned}
    \dot{\state}(t) &= \A \state(t) + \B \inpVar(t), \\
    \outputVar(t) &= \Mvec (\state(t) \otimes \state(t)). 
  \end{aligned} \right.
\end{equation}
For this restricted case, the bivariate transfer function and output are given by \cite{DiaHGA23}
\begin{subequations}
  \begin{align}
    \label{eq:tf_only_M}
    \widetilde{\transferFunction}(s_1, s_2) &= \Mvec \left[(s_1 \I - \A)^{-1} \B \otimes (s_2\I - \A)^{-1} \B \right] \in \C^{p\times m^2},\\
    \outVarFreqBiVar(s_1, s_2) &= \widetilde{\transferFunction}(s_1, s_2) \left[\inpVarFreq(s_1) \otimes \inpVarFreq(s_2)\right] \in \C^p. \label{eq:out_bivar}
  \end{align}
\end{subequations}

The following \Cref{thm:central} establishes $\Ltwo$-$\Ltwo$-gain for the system \eqref{eq:base}. Its proof relies on three auxiliary results: First, \Cref{lem:assoc} shows how the univariate transfer function and output in the frequency domain can be computed on the imaginary axis from the bivariate correspondences. Second, \Cref{lem:y-twonorm-bound-Y-onenorm} rewrites the $\Ltwo$-norm of the output $\outVar$ as the
$\Lone$-norm of the bivariate frequency-domain output $\outVarFreqBiVar$. Third, \Cref{lem:tf_autocorrelation} evaluates the supremum appearing during the estimation of the $\Ltwo$-norm of the output.

\begin{theorem}\label{thm:central}
 Consider the \LQO  system $\system_{\Mvec}$ in \eqref{eq:base}, then, the $\Ltwo$-norm of the output $\outVar$ is bounded by the $\Ltwo$-norm of the input $\inpVar$ by
  \begin{equation*}
    \funNorm{\outVar}_{\timeSpacetwo{p}} \le \frac{1}{\sqrt{2 \pi}}\funNorm{\widetilde \transferFunction(\cdot, -\cdot)}_{\freqSpaceFC{{p\times m^2}}} \funNorm{\inpVar}_{\timeSpacetwo{m}}^2,
  \end{equation*}
  where
  \begin{equation}\label{eq:GFNorm}
    \funNorm{\widetilde \transferFunction(\cdot, -\cdot)}_{\freqSpaceFC{{p\times m^2}}}^2 \coloneqq \intFreqSpace \vecNorm{\widetilde \transferFunction(\sigma, -\sigma)}_F^2 \dsigma.
  \end{equation}
\end{theorem}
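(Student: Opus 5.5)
The plan is to work entirely in the frequency domain and to follow the three-step strategy announced before the theorem. First, by Plancherel, $\funNorm{\outVar}_{\timeSpacetwo{p}} = \frac{1}{\sqrt{2\pi}}\funNorm{\outVarFreq}_{\freqSpacetwoC{p}}$, where $\outVarFreq$ is the Laplace transform of $\outVar$ restricted to $\imagUnit\R$. Since $\outVar(t)=\Mvec(\state(t)\otimes\state(t))$ is a product of two $\Ltwo$ functions, its transform is a convolution. This is exactly the association transform of the bivariate output:
\begin{equation*}
  \outVarFreq(s) = \frac{1}{2\pi \imagUnit}\intFreqSpace \outVarFreqBiVar(s-\sigma,\sigma)\dsigma
  = \frac{1}{2\pi \imagUnit}\intFreqSpace \widetilde\transferFunction(s-\sigma,\sigma)\bigl[\inpVarFreq(s-\sigma)\otimes\inpVarFreq(\sigma)\bigr]\dsigma .
\end{equation*}
This is the content of \Cref{lem:assoc}, which I would invoke here.

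Second, I would avoid estimating $\funNorm{\outVarFreq}_{\Ltwo}$ directly. Instead I would write $\funNorm{\outVarFreq}_{\Ltwo}^2=\langle \outVarFreq,\outVarFreq\rangle$ and insert the convolution representation for one factor. Equivalently, $\funNorm{\outVar}_{\Ltwo}^2=\int_0^\infty \outVar(t)^\T\outVar(t)\dt$. Expanding both copies of $\outVar$ through \eqref{eq:outunivar} gives a fourfold integral of kernels against $\inpVar\otimes\inpVar\otimes\inpVar\otimes\inpVar$. Transforming this to frequency, it becomes an integral over $(s_1,s_2)$ of $\outVarFreqBiVar(s_1,s_2)^{\ast}$ times an \emph{autocorrelation} of $\widetilde\transferFunction$ along shifted anti-diagonals, applied to $\inpVarFreq(s_1)\otimes\inpVarFreq(s_2)$. \Cref{lem:y-twonorm-bound-Y-onenorm} packages this by rewriting the $\Ltwo$-norm of $\outVar$ through an $\Lone$-type norm of $\outVarFreqBiVar$, so that
\begin{equation*}
  \funNorm{\outVar}^2_{\Ltwo} \le C\, \sup_{s_1,s_2}\Bigl\| \intFreqSpace \widetilde\transferFunction(s_1+\sigma,-\sigma)^{\ast}\cdots \dsigma\Bigr\| \cdot \intFreqSpace\!\!\intFreqSpace \vecNorm{\inpVarFreq(s_1)}^2\vecNorm{\inpVarFreq(s_2)}^2\ds_1\ds_2 .
\end{equation*}
The last double integral factorizes as $(2\pi)^2\funNorm{\inpVar}^4_{\Ltwo}$ by Plancherel. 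Applying Cauchy--Schwarz in $\sigma$ and in the Kronecker/Frobenius structure reduces the remaining supremum to a supremum over shifts of $\intFreqSpace \vecNorm{\widetilde\transferFunction(\tau+\sigma,-\sigma)}_F^2\dsigma$.

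Third, I would use \Cref{lem:tf_autocorrelation} to show that this supremum over the shift $\tau\in\imagUnit\R$ is attained at $\tau=0$. The lemma thus gives exactly \eqref{eq:GFNorm}. The underlying reason is that $\widetilde\transferFunction(\tau+\sigma,-\sigma)$ is built from resolvents with $\A$ Hurwitz, so the correlation is maximized at zero shift, as in a positive-definite-kernel argument. Collecting the constants and taking square roots then yields $\frac{1}{\sqrt{2\pi}}\funNorm{\widetilde\transferFunction(\cdot,-\cdot)}_{\freqSpaceFC{p\times m^2}}\funNorm{\inpVar}^2_{\Ltwo}$.

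The main obstacle is the constant bookkeeping in the second step. The factors of $2\pi$ from the association transform and from Plancherel must combine to exactly $1/\sqrt{2\pi}$, and Cauchy--Schwarz has to be arranged so that the $\sigma$-integral lands precisely on the anti-diagonal $(\sigma,-\sigma)$ rather than on a shifted line. My first attempt would be to group $\inpVarFreq(s-\sigma)\otimes\inpVarFreq(\sigma)$ with the variables $(s_1,s_2)=(s-\sigma,\sigma)$. I would then apply Cauchy--Schwarz in $\sigma$ for fixed $s$, and afterwards change variables so that the transfer-function factor depends only on the difference. Any leftover shift dependence is then absorbed by the supremum handled in \Cref{lem:tf_autocorrelation}.
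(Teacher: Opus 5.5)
Your outline follows the paper's own route. That route is Plancherel plus the association transform (\Cref{lem:assoc}), the fiberwise $\Lone$ bound of \Cref{lem:y-twonorm-bound-Y-onenorm}, Cauchy--Schwarz in $\sigma$ for fixed $s$ (your last paragraph), the identity $\intFreqSpace\intFreqSpace\vecNorm{\inpVarFreq(\sigma)}_2^2\vecNorm{\inpVarFreq(s-\sigma)}_2^2\dsigma\ds=\funNorm{\inpVarFreq}^4$, and finally the zero-shift statement of \Cref{lem:tf_autocorrelation}. The quadratic-form detour in your second step is unnecessary and ends at the same supremum over fibers. The constants do combine to $(2\pi)^{-3}(2\pi)^2=(2\pi)^{-1}$. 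The gap is your third step, and the reason you give for it is wrong. That $\tau\mapsto\intFreqSpace\vecNorm{\widetilde{\transferFunction}_i(\sigma,\tau-\sigma)}_2^2\dsigma$ peaks at $\tau=0$ has nothing to do with $\A$ being Hurwitz. It is an autocorrelation of positive type only when $\M_i$ is semidefinite. If $\M_i=\cholesky^\T\cholesky$, set $g_j(\sigma)=\cholesky(\sigma\I-\A)^{-1}\B\mathbf{e}_j$ and $P(\sigma)=\sum_j g_j(\sigma)g_j(\sigma)\adj\succeq 0$. Then $\sum_{j,k}|\widetilde{\transferFunction}_{i,jk}(\sigma,\tau-\sigma)|^2=\tr\bigl(P(\sigma)P(\sigma-\tau)\bigr)$, and Cauchy--Schwarz in $\Ltwo(\imagUnit\R,\C^{n\times n})$ plus translation invariance gives the claim. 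For indefinite $\M_i=\cholesky^\T\unitary\cholesky$ the same computation yields $\tr\bigl(P(\sigma)\unitary P(\sigma-\tau)\unitary\bigr)$. Cauchy--Schwarz then only bounds its integral by $\intFreqSpace\tr(P^2\unitary^2)\dsigma$, which is in general strictly larger than $\intFreqSpace\tr(P\unitary P\unitary)\dsigma=\funNorm{\widetilde{\transferFunction}_i(\cdot,-\cdot)}^2$. The last equality in the paper's proof of \Cref{lem:tf_autocorrelation} conflates exactly these two quantities.

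No argument can close this gap in general, because for indefinite $\M_i$ both the lemma and the theorem are false. Take $\A=\begin{bmatrix}0&1\\-2&-3\end{bmatrix}$, $\B=\begin{bmatrix}0\\1\end{bmatrix}$, $\M=\tfrac12\begin{bmatrix}0&1\\1&0\end{bmatrix}$, so that $\outVar=\state_1\state_2=\tfrac12\tfrac{\mathrm d}{\mathrm dt}\state_1^2$. Since $(s\I-\A)^{-1}\B=(1,s)^\T/((s+1)(s+2))$, one gets $\widetilde{\transferFunction}(s_1,s_2)=\frac{s_1+s_2}{2(s_1+1)(s_1+2)(s_2+1)(s_2+2)}$. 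This vanishes identically on the anti-diagonal but not on the lines $s_1+s_2=\tau\neq 0$. The claimed bound would therefore force $\outVar\equiv 0$. However, for $\inpVar=\chi_{[0,1]}$ one has $\state_1(t)=\tfrac12-\mexp{-t}+\tfrac12\mexp{-2t}$ on $[0,1]$, and $\outVar=\state_1\dot\state_1>0$ on $(0,1]$. So your argument, like the paper's, proves the statement only under the extra hypothesis that each $\M_i$ is positive or negative semidefinite. For general symmetric $\M_i$, split $\M_i=\M_i^+-\M_i^-$ into semidefinite parts, apply the semidefinite result to each, and use the triangle inequality. This gives a valid bound with $\funNorm{\widetilde{\transferFunction}_{+}(\cdot,-\cdot)}+\funNorm{\widetilde{\transferFunction}_{-}(\cdot,-\cdot)}$ in place of $\funNorm{\widetilde{\transferFunction}(\cdot,-\cdot)}$.
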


\begin{remark}
  The norm in \eqref{eq:GFNorm} divided by ${\sqrt{2\pi}}$ strongly resembles the $\Htwo$-norm. However, since $\widetilde{\transferFunction}(\sigma, -\sigma)$ is neither analytic in the open left nor in the open right complex half-plane, it does not lie in $\Htwo$. Therefore, this norm can not be identified as the $\Htwo$-norm. 
\end{remark}

\begin{lemma}\label{lem:assoc}
  The univariate transfer function $\transferFunction(s) = \calL[\kernel](s)$ and the output in the frequency space $\outVarFreq(s) = \calL[\outVar](s)$ for \eqref{eq:base} can be computed for $s \in \imagUnit \R$ as 
  \begin{subequations}
    \begin{align}\label{eq:assoc_trafo}
      \transferFunction(s) & = \frac{1}{2\pi\imagUnit} \int_{-\imagUnit\infty}^{\imagUnit\infty} \widetilde{\transferFunction}(\sigma, s-\sigma) \dsigma, \quad \text{and}\\
      \outVarFreq(s) &= \frac{1}{2\pi\imagUnit} \int_{-\imagUnit\infty}^{\imagUnit\infty} \outVarFreqBiVar(\sigma, s-\sigma) \dsigma. \label{eq:assoc_trafo_y}
    \end{align}
  \end{subequations}
\end{lemma}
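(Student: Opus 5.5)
Since the kernel of \eqref{eq:base} is $\kernel(\tau_1,\tau_2)=\Mvec[\mexp{\A\tau_1}\B\otimes\mexp{\A\tau_2}\B]$, the univariate transfer function $\transferFunction(s)=\calL[\kernel(t,t)](s)$ is the Laplace transform of the diagonal restriction $t\mapsto\kernel(t,t)$. The plan is to identify this restriction with an inverse bivariate Laplace transform and then integrate out the frequency variables; this is the classical association of variables \cite{CheC73}.

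\textbf{Step 1 (inversion along the imaginary axis).} Because $\A$ is Hurwitz, $\widetilde{\transferFunction}$ is analytic on $\{\real s_1>-\alpha,\ \real s_2>-\alpha\}$ for some $\alpha>0$. Moreover, each factor $(s_j\I-\A)^{-1}\B$ is square integrable on $\imagUnit\R$. Hence the bivariate Bromwich inversion can be taken on the contours $\real s_1=\real s_2=0$:
\begin{equation*}
\kernel(t_1,t_2)=\frac{1}{(2\pi\imagUnit)^2}\intFreqSpace\intFreqSpace \widetilde{\transferFunction}(\sigma_1,\sigma_2)\,\mexp{\sigma_1 t_1+\sigma_2 t_2}\dsigma_1\dsigma_2 ,\qquad t_1,t_2>0 .
\end{equation*}
The integrand factorises into a Kronecker product of two univariate $\Ltwo$ functions, so this inversion is simply the univariate $\Ltwo$ Fourier inversion of $(s\I-\A)^{-1}\B\mapsto\mexp{\A t}\B$, applied in each factor.

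\textbf{Step 2 (restriction to the diagonal and change of variables).} Set $t_1=t_2=t$ and substitute $\sigma_2=s-\sigma_1$ with $s\in\imagUnit\R$. This gives
\begin{equation*}
\kernel(t,t)=\frac{1}{2\pi\imagUnit}\intFreqSpace\Bigl[\frac{1}{2\pi\imagUnit}\intFreqSpace\widetilde{\transferFunction}(\sigma,s-\sigma)\dsigma\Bigr]\mexp{st}\,\mathrm{d}s .
\end{equation*}
By uniqueness of the inverse Fourier transform, the bracket equals $\calL[\kernel(t,t)](s)=\transferFunction(s)$ on $\imagUnit\R$, which is \eqref{eq:assoc_trafo}.

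A cleaner route, which avoids convergence issues of the Bromwich integral, is to verify the identity directly in the frequency domain. The inner integral is a convolution of two $\Ltwo(\imagUnit\R)$ functions, so it is absolutely convergent and continuous in $s$ by Cauchy--Schwarz. By the Fourier convolution theorem (Plancherel), the Fourier transform of a pointwise product of $\Ltwo$ functions is $\tfrac{1}{2\pi}$ times the convolution of their transforms. Apply this entrywise to the Kronecker product $\mexp{\A t}\B\otimes\mexp{\A t}\B$, whose factors have transforms $(s\I-\A)^{-1}\B$. Multiplying by $\Mvec$ on the left then yields \eqref{eq:assoc_trafo} with the constant $\tfrac{1}{2\pi\imagUnit}\dsigma=\tfrac{1}{2\pi}\,\mathrm{d}\omega$.

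\textbf{Step 3 (the output).} The same argument applies to the output. From \eqref{eq:base}, $\outVar(t)=\Mvec(\state(t)\otimes\state(t))$, and $\state\in\timeSpacetwo{n}$ with $\calL[\state](s)=(s\I-\A)^{-1}\B\inpVarFreq(s)$. Thus $\outVar$ is again the diagonal restriction of the product $\outVarBiVar(t_1,t_2)=\Mvec(\state(t_1)\otimes\state(t_2))$, and its Laplace transform is $\outVarFreqBiVar$ from \eqref{eq:out_bivar}. Applying the convolution theorem with the $\Ltwo$ factors $\state$ gives \eqref{eq:assoc_trafo_y}.

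The main obstacle is the justification step: making sure the convolution integrals converge and that the identity holds pointwise on $\imagUnit\R$, not just almost everywhere. For the input-driven output, $\state$ is merely in $\Ltwo$, so $\outVar\in\Lone$, and the Fourier transform of an $\Lone$ function is continuous. The right-hand side is a convolution of two $\Ltwo$ functions and is therefore also continuous. The two continuous functions agree almost everywhere by the $\Ltwo$ convolution theorem, so they agree everywhere on $\imagUnit\R$, which settles pointwise equality.
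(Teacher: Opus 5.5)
You prove the identity directly, where the paper cites it, so your route is genuinely different and, in its frequency-domain form, correct.

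\textbf{The paper's route.} It observes that every entry of $(s_1\I-\A)^{-1}\B\otimes(s_2\I-\A)^{-1}\B$ has product form $f_1(s_1)f_2(s_2)$. It then invokes the association-of-variables theorem of Chen and Chiu for such functions. The only hypothesis it checks is that, for $\real s=0$, the imaginary axis lies in the common region where $f_1(\sigma)$ and $f_2(s-\sigma)$ are analytic, which holds because $\A$ is Hurwitz. The output case is dismissed as ``analogous''.

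\textbf{Your route.} You prove the association identity on $\imagUnit\R$ from scratch. Parseval gives $\widehat{fg}(\omega)=\tfrac{1}{2\pi}(\hat f*\hat g)(\omega)$ for $f,g\in\Ltwo$. You apply this entrywise to the Kronecker product, and use the mixed-product rule to rewrite $X(\sigma)\otimes X(s-\sigma)$ as $\widetilde{\transferFunction}(\sigma,s-\sigma)[\inpVarFreq(\sigma)\otimes\inpVarFreq(s-\sigma)]$.

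\textbf{What each buys.} The citation route is shorter and places the lemma in the classical association-of-variables theory, which is not tied to the imaginary axis. Your route is self-contained. It also handles the output rigorously, which is where the paper's ``analogously'' is thin. For $\inpVar\in\Ltwo$ only, $\inpVarFreq$ is merely an $\Ltwo$ boundary function on $\imagUnit\R$, not a function analytic on the contour. You note that $\outVar\in\Lone$, so $\outVarFreq$ is continuous, and that the right-hand side is a continuous convolution of $\Ltwo$ functions. This yields equality at every point of $\imagUnit\R$, not just almost everywhere.

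\textbf{Steps 1--2.} Present these only as motivation, not as part of the proof. The double Bromwich integral is not absolutely convergent, since its integrand decays only like $1/(|\sigma_1||\sigma_2|)$. So restricting it to the diagonal $t_1=t_2$, substituting $\sigma_2=s-\sigma_1$, and exchanging the order of integration is not justified by Fubini. Your convolution-theorem argument does not depend on these steps, so nothing is lost by dropping them.
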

\begin{proof}
  For simplicity of presentation, we consider first 
  \begin{align*}
    \widetilde \Gamma(s_1, s_2) \coloneqq (s_1\I-\A)^{-1} \B \otimes (s_1\I-\A)^{-1} \B,
  \end{align*}
  for which each component can easily be identified to be compositional, i.e., of the form $f(s_1, s_2) = f_1(s_1)\cdot f_2(s_2)$. For compositional functions, the association transform is given in \cite[Sec. 4]{CheC73} as
  \begin{equation*}
    \calA_2[f](s) = \frac{1}{2\pi \imagUnit}\int_{c-\imagUnit \infty}^{c+\imagUnit\infty} f_1(\sigma) f_2(s-\sigma) \dsigma
  \end{equation*}
  where $c \in \R$ such that the integration path lies entirely in the region where $f_1$ and $f_2(s-\cdot)$ are analytic. 
  
  Since $\A$ is assumed to be Hurwitz, we follow that $\widetilde{\Gamma}_1(\sigma) \coloneqq (\sigma \I-\A)^{-1} \B$ is analytic in the closed right complex half-plane, whereas $\widetilde{\Gamma}_2(s-\sigma) \coloneqq ((s-\sigma) \I-\A)^{-1} \B$ is analytic for $\real(\sigma)\le \real(s)$. Hence, for $\real(s)=0$ the imaginary axis is guaranteed to lie within the region of convergence of the integrand. As a consequence, the association transform is well-defined for $\widetilde{\Gamma}$ and due to linearity also for $\widetilde{\transferFunction}(s_1, s_2) = \Mvec \widetilde{\Gamma}(s_1, s_2)$. Then, the result \cite[Sec. 4]{CheC73} yields the equality $\transferFunction(s) = \calA_2[\widehat \transferFunction](s)$ for $s \in \imagUnit \R$.
  The result for the output $\outVarFreq$ follows analogously. 
\end{proof}

\begin{lemma}\label{lem:y-twonorm-bound-Y-onenorm}
  For an \LQO  system $\system_{\Mvec}$ in \eqref{eq:base} the $\Ltwo$-norm of the output $\outVar$ is bounded by the $\Ltwo$-norm of the bivariate output in the frequency space $\outVarFreqBiVar(s_1, s_2) = \calL_2(\outVarBiVar)(s_1, s_2)$ as follows 
  \begin{align*}
    \funNorm{\outVar}_{\timeSpacetwo{p}}^2 &\le 
    \frac{1}{(2\pi)^3} \intFreqSpace \sum_{i=1}^p \funNorm{\outVarFreqBiVar_i(\cdot, s-\cdot)}_{\freqSpaceoneC{}}^2 \ds
  \end{align*}
\end{lemma}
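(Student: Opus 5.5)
The plan has three steps: move to the frequency domain with Plancherel, express $\outVarFreq$ on the imaginary axis through \Cref{lem:assoc}, and then bound the resulting line integral pointwise by its $\Lone$-norm. Since $\A$ is Hurwitz and $\inpVar \in \timeSpacetwo{m}$, the state satisfies $\state \in \timeSpacetwo{n} \cap \timeSpaceinf{n}$. Hence each component $\outVar_i(t) = \state(t)^\T \M_i \state(t)$ lies in $\Lone \cap \Ltwo$, and its Laplace transform on the imaginary axis coincides with its Fourier transform. By the Plancherel theorem, with the convention that the inverse transform along $\imagUnit\R$ carries the factor $\frac{1}{2\pi\imagUnit}$,
\begin{equation*}
  \funNorm{\outVar}_{\timeSpacetwo{p}}^2 = \frac{1}{2\pi} \intFreqSpace \sum_{i=1}^p \left|\outVarFreq_i(s)\right|^2 \ds ,
\end{equation*}
where the integral along $\imagUnit\R$ is understood with respect to arc length, i.e.\ $\ds = |\mathrm{d}s|$ as in \eqref{eq:GFNorm}.

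Next I substitute the representation \eqref{eq:assoc_trafo_y} of \Cref{lem:assoc} for each component:
\begin{equation*}
  \outVarFreq_i(s) = \frac{1}{2\pi\imagUnit} \intFreqSpace \outVarFreqBiVar_i(\sigma, s-\sigma) \dsigma, \qquad s \in \imagUnit\R .
\end{equation*}
Taking absolute values and moving the modulus inside the integral gives, for every $s \in \imagUnit\R$,
\begin{equation*}
  \left|\outVarFreq_i(s)\right| \le \frac{1}{2\pi} \funNorm{\outVarFreqBiVar_i(\cdot, s-\cdot)}_{\freqSpaceoneC{}} .
\end{equation*}
Squaring this estimate, inserting it into the Plancherel identity and summing over $i$ produces the prefactor $\frac{1}{2\pi}\cdot\frac{1}{(2\pi)^2} = \frac{1}{(2\pi)^3}$, which is exactly the claimed bound.

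The main obstacle is justifying the two analytic steps rather than the algebra. First, the line integral in \eqref{eq:assoc_trafo_y} has to converge absolutely; otherwise the $\Lone$-norm on the right is infinite and the bound is trivial but vacuous. Second, the identification $\outVarFreq = \calL[\outVar]$ has to hold on the boundary $\imagUnit\R$ and not only in the open right half-plane. For the first point I would use $\outVarFreqBiVar_i(\sigma, s-\sigma) = \inpVarFreq(s-\sigma)^\T (\cdots)\, \M_i (\sigma\I-\A)^{-1}\B\, \inpVarFreq(\sigma)$, with a transpose/conjugation arrangement read off from \eqref{eq:fullvectf}. The resolvent factors are bounded on $\imagUnit\R$ because $\A$ is Hurwitz, and $\inpVarFreq \in \freqSpacetwoC{m}$. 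Cauchy--Schwarz in $\sigma$ then shows that the $\Lone$-norm is finite for every $s$. For the second point I would appeal to the boundary-value (Paley--Wiener) theory for $\Ltwo$ functions on $[0,\infty)$. If needed, I would first prove the bound for smooth, compactly supported inputs and then pass to general inputs in $\timeSpacetwo{m}$ by density, which works because both sides depend continuously on $\inpVar$.
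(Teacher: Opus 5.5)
Your proposal is correct and follows the paper's proof essentially step for step. Both arguments apply Parseval/Plancherel to get $\frac{1}{2\pi}$ times the squared $L_2$-norm of $Y$, substitute the association-transform representation of the preceding lemma, and pull the modulus inside the line integral componentwise, which yields the prefactor $(2\pi)^{-3}$. Your extra remarks supply rigor the paper leaves implicit without changing the argument: that $y_i \in L_1 \cap L_2$, so its Laplace transform on $i\mathbb{R}$ is its Fourier transform, and that each fiber integral converges absolutely by boundedness of the resolvents and Cauchy--Schwarz.
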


\begin{proof}
  We can compute the $\Ltwo$-norm of the output in the time domain $\outVar$ by applying Parseval's theorem in $(P)$ and computing the $\Ltwo$-norm of the output in the frequency domain $\outVarFreq$. Denoting the $i$-th unit vector in $\R^p$ by $\mathbf{e}_i$ this yields
  \begin{equation*}
    \begin{aligned}
      \funNorm{\outputVar}_{\timeSpacetwo{\outVarDim}}^2 &\overset{(P)}{=} \frac{1}{2\pi}\funNorm{\outVarFreq}_{\freqSpacetwo{\outVarDim}}^2 \\
      &\!\!\!\!\!\!\!\!\overset{\Cref{lem:assoc}}{=}\frac{1}{2\pi} \funNorm{\frac{1}{2\pi \imagUnit}\intFreqSpace \outVarFreqBiVar(\sigma, \cdot-\sigma) \dsigma}_{\freqSpacetwo{\outVarDim}}^2\\
      &\;\le \frac{1}{2\pi}\funNorm{\frac{1}{2\pi \imagUnit}\intFreqSpace \sum_{i=1}^p \left| \outVarFreqBiVar_i(\sigma, \cdot-\sigma)\right| \mathbf{e}_i \dsigma}_{\freqSpacetwo{\outVarDim}}^2\\
      &\;= \frac{1}{(2\pi)^3} \intFreqSpace \sum_{i=1}^p \funNorm{\outVarFreqBiVar_i(\cdot, s-\cdot)}_{\freqSpaceoneC{}}^2 \ds. \qquad\qquad\qquad\qedhere
    \end{aligned}
  \end{equation*}
\end{proof}

\begin{lemma}\label{lem:tf_autocorrelation}
  Given the bivariate transfer function \eqref{eq:tf_only_M} for one output, i.e, $\outVarDim=1$ and $\Mvec \in \R^{1 \times \stateDim}$, it holds that
  \begin{align*}
    \sup_{s \in \imagUnit\R }\intFreqSpace \vecNorm{\widetilde \transferFunction(\sigma, s-\sigma)}^2_2 \dsigma &= \intFreqSpace \vecNorm{\widetilde \transferFunction(\sigma, -\sigma)}^2_2 \dsigma.
  \end{align*}
\end{lemma}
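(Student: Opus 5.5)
We rewrite the integral as an autocorrelation-type expression in time and use positive semidefiniteness to locate its maximum at $s=0$. For $p=1$, by \eqref{eq:fullvectf} we have $\widetilde\transferFunction(s_1,s_2)=\vecc(\B^\T(s_2\I-\A^\T)^{-1}\M(s_1\I-\A)^{-1}\B)^\T$, so $\vecNorm{\widetilde\transferFunction(s_1,s_2)}_2=\vecNorm{\B^\T(s_2\I-\A^\T)^{-1}\M(s_1\I-\A)^{-1}\B}_F$. This is the Fourier transform in both variables of the kernel $\kernel(\tau_1,\tau_2)=\B^\T\mexp{\A^\T\tau_2}\M\mexp{\A\tau_1}\B$ supported on $\tau_1,\tau_2\ge 0$. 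Write $s=\imagUnit\omega$ and $\sigma=\imagUnit\nu$. Then define
\begin{equation*}
  \Phi(\omega):=\int_{\R}\vecNorm{\widetilde\transferFunction(\imagUnit\nu,\imagUnit(\omega-\nu))}_F^2\,\mathrm{d}\nu .
\end{equation*}

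The first step is to compute $\Phi$ via Fourier analysis. The function $\nu\mapsto \widetilde\transferFunction(\imagUnit\nu,\imagUnit(\omega-\nu))$ is the Fourier transform of a one-variable function. Concretely,
\begin{equation*}
  \widetilde\transferFunction(\imagUnit\nu,\imagUnit(\omega-\nu))=\int\!\!\int \kernel(\tau_1,\tau_2)\,\mexp{-\imagUnit\nu(\tau_1-\tau_2)}\mexp{-\imagUnit\omega\tau_2}\dtau_1\dtau_2 .
\end{equation*}
Substituting $r=\tau_1-\tau_2$ gives the Fourier transform in $r$ of $g_\omega(r):=\int \kernel(r+\tau_2,\tau_2)\mexp{-\imagUnit\omega\tau_2}\dtau_2$. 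Plancherel then yields $\Phi(\omega)=2\pi\int_\R\vecNorm{g_\omega(r)}_F^2\,\mathrm{d}r$. Since $\A$ is Hurwitz, all these integrals are absolutely convergent and Fubini applies.

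The second step is the triangle inequality:
\begin{equation*}
  \vecNorm{g_\omega(r)}_F\le\int\vecNorm{\kernel(r+\tau_2,\tau_2)}_F\dtau_2=\vecNorm{g_0(r)}_F .
\end{equation*}
The last equality is the main obstacle, because it needs the integrand of $g_0$ not to cancel. I would avoid this issue by working with the expanded quadratic form instead. Write
\begin{equation*}
  \Phi(\omega)=2\pi\int\!\!\int\!\!\int \langle \kernel(r+a,a),\kernel(r+b,b)\rangle_F\,\mexp{-\imagUnit\omega(a-b)}\,\mathrm{d}a\,\mathrm{d}b\,\mathrm{d}r .
\end{equation*}
This exhibits $\Phi$ as the Fourier transform of $\psi(t):=2\pi\int\!\int\langle \kernel(r+a,a),\kernel(r+a-t,a-t)\rangle_F\,\mathrm{d}a\,\mathrm{d}r$. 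The function $\psi$ is an autocorrelation, hence a positive definite function with $\psi(-t)=\overline{\psi(t)}$. Then $|\Phi(\omega)|\le\int|\psi(t)|\dt$, but this bound does not directly give $\Phi(0)$.

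The cleaner route, which I would actually pursue, is to use the factorized structure of $\kernel$. Diagonalize the symmetric matrix $\M=\sum_k\lambda_k v_kv_k^\T$ and set $f_k(\tau):=v_k^\T\mexp{\A\tau}\B$. Then
\begin{equation*}
  \widetilde\transferFunction(s_1,s_2)=\sum_k\lambda_k F_k(s_2)^\T F_k(s_1).
\end{equation*}
Expanding the Frobenius norm turns $\Phi(\omega)$ into a sum of terms of the form $\int \langle F_k(\imagUnit\nu),F_l(\imagUnit\nu)\rangle\,\langle F_k(\imagUnit(\omega-\nu)),F_l(\imagUnit(\omega-\nu))\rangle\,\mathrm{d}\nu$, which is a convolution $(H_{kl}*H_{kl})(\omega)$ with $H_{kl}(\nu):=\langle F_k(\imagUnit\nu),F_l(\imagUnit\nu)\rangle$. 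The sum $\sum_{k,l}\lambda_k\lambda_l(H_{kl}*H_{kl})$ is the Fourier transform of $\sum_{k,l}\lambda_k\lambda_l h_{kl}(t)^2$, where $h_{kl}$ is the cross-correlation of $f_k$ and $f_l$. I would then try to show that this function of $t$ equals the squared Frobenius norm of a matrix, namely $\vecNorm{\sum_k\lambda_k\, c_k(t)}^2$ for suitable correlation matrices $c_k(t)$, and hence is nonnegative. A nonnegative integrable function has a Fourier transform maximized in modulus at $0$, which gives $\sup_\omega\Phi(\omega)=\Phi(0)$. Verifying this nonnegativity, that is, writing the time-domain function as a squared norm, is the step I expect to require the most care.
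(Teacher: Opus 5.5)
There is a genuine gap, exactly at the step you flagged, and it cannot be closed: the statement is false for indefinite $\M$. Your reduction itself is sound. With $C(t)=\int\mexp{\A(t+\tau)}\B\B^\T\mexp{\A^\T\tau}\dtau$ (over $\tau\ge\max\{0,-t\}$) you have $h_{kl}(t)=v_k^\T C(t)v_l$ and $\Phi=2\pi\widehat{\chi}$, where
\[
\chi(t)=\sum_{k,l}\lambda_k\lambda_l\,h_{kl}(t)^2=\tr\bigl(\M\,C(t)\,\M\,C(t)^\T\bigr).
\]
If $\M\succeq 0$, this equals $\vecNorm{\M^{1/2}C(t)\M^{1/2}}_F^2\ge 0$ (for $\M\preceq 0$, use $-\M$), and then your argument is a complete and clean proof. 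If $\M$ has eigenvalues of both signs, however, $\chi$ can change sign and $\Phi$ need not peak at $0$.

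Concretely, take $n=2$, $m=p=1$, $\A=\begin{bmatrix}-1&10\\-10&-1\end{bmatrix}$, $\B=\mathbf{e}_1$ and $\M=\mathrm{diag}(1,-1)$. Then $\kernel(\tau_1,\tau_2)=\mexp{-(\tau_1+\tau_2)}\cos(10(\tau_1+\tau_2))$ and, with $c=1-10\imagUnit$,
\[
\widetilde\transferFunction(s_1,s_2)=\frac12\Bigl[\frac{1}{(s_1+c)(s_2+c)}+\frac{1}{(s_1+\overline{c})(s_2+\overline{c})}\Bigr].
\]
On the anti-diagonal, $\widetilde\transferFunction(\imagUnit\nu,-\imagUnit\nu)=\frac{\nu^2-99}{(\nu^2-99)^2+400}$, because the two conjugate modes largely cancel. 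This gives $\Phi(0)=\frac{\pi}{4}\bigl(\frac{1}{101}-\frac{299}{101^3}\bigr)\approx 7.5\cdot 10^{-3}$. On the line $s_1+s_2=20\imagUnit$, by contrast, the first summand equals $\frac{1}{2(1+(\nu-10)^2)}$ with no cancellation, and one gets $\Phi(20)\approx 0.39\approx\pi/8$, about fifty times larger. In your time-domain picture, $\chi(t)\approx\frac18\mexp{-2|t|}\cos(20t)$, whose Fourier transform peaks at $\pm 20$, not at $0$. This is precisely the cancellation you worried about in the triangle-inequality step.

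Do not try to follow the paper's proof instead, because it breaks at the same point. Its H\"older step bounds the integral by $\int\vecNorm{P(\sigma)\Lambda}_F^2\dsigma=\int\tr(P^2\Lambda^2)\dsigma$ with $P=\sum_j g_jg_j^\ast$. However, $\vecNorm{\widetilde\transferFunction(\sigma,-\sigma)}_2^2=\tr(P\Lambda P\Lambda)$. Already for $m=1$ these are $\vecNorm{g}^2\vecNorm{\Lambda g}^2$ and $|g^\ast\Lambda g|^2$, so the final ``$=$'' is only ``$\ge$''. The chain therefore never reaches the anti-diagonal quantity, and the proof of Theorem~\ref{thm:central} inherits this defect.

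What your method does prove is the lemma for semidefinite $\M$. For general symmetric $\M=\M_+-\M_-$ (spectral splitting), linearity of $\widetilde\transferFunction$ in $\M$ and the triangle inequality in $\Ltwo$ give
\[
\sup_{\omega\in\R}\Phi_{\M}(\omega)^{1/2}\le\Phi_{\M_+}(0)^{1/2}+\Phi_{\M_-}(0)^{1/2}.
\]
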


\begin{proof}
  Since we consider $p=1$, we can use the equivalent representation of \eqref{eq:tf_only_M} given by $\widetilde\transferFunction(s_1, s_2) = \vecc( \B^\T (s_2\I - \A^\T )^{-1}\M (s_1 \I - \A)^{-1} \B)^\T $ with $\M$ symmetric. Due to the symmetry, there exists an eigendecomposition for $\M$ such that $\M = \cholesky^\T \unitary  \cholesky$ with $\unitary$ being the real diagonal matrix containing the eigenvalues. Using the notation
  \begin{equation*}
    g_i(s) \coloneqq \cholesky^\T (s\I- \A)^{-1} \B \mathbf{e}_i \in \C^{\stateDim}
  \end{equation*}
  where $\mathbf{e}_i \in \R^m$ denotes the $i$-th unit vector in $\R^m$, we can conclude that for $s_1, s_2\in \imagUnit \R$ it follows that $\overline{g_i(s_2)}=g_i(-s_2)$ and that we can write for the transfer function and the $\Ltwo$-norm the following
  \begin{align*}
    \widetilde \transferFunction_{m\cdot i +j}(s_1, s_2) &= g_i(-s_2)\adj \unitary  g_j (s_1), \quad \text{and}\\
    \intFreqSpace \vecNorm{\widetilde \transferFunction(\sigma, s-\sigma)}^2_2 \dsigma &= \intFreqSpace \sum_{i,j=1}^{m} |g_i(\sigma-s)\adj \unitary  g_j(\sigma)|^2 \dsigma .
  \end{align*}
  Here, the $g_i(-s_2)\adj$ denotes the conjugate transpose of $g_i(-s_2)$.
  Since a scalar is equal to its trace, we find
  \begin{align*}
     |g_i(\sigma-s)\adj \unitary g_j(\sigma)|^2 & = g_i(\sigma-s)\adj \unitary  g_j(\sigma)g_j(\sigma)\adj \unitary g_i(\sigma-s) \\
     &= \tr\left(g_j(\sigma)g_j(\sigma)\adj \unitary  g_i(\sigma-s) g_i(\sigma-s)\adj \unitary \right)\\
     &= \left\langle (g_j(\sigma)g_j(\sigma)\adj \unitary)^\ast , g_i(\sigma-s) g_i(\sigma-s)\adj \unitary \right\rangle_F,
  \end{align*}
  such that considering the sum over all contributions yields
  \begin{align*}
    \sum_{i,j = 1}^m |g_i(\sigma-s)\adj\unitary g_j(\sigma)|^2 = \left\langle \sum_{j=1}^m (g_j(\sigma)g_j(\sigma)\adj \unitary)\adj , \sum_{i=1}^m g_i(\sigma-s) g_i(\sigma-s)\adj \unitary \right\rangle_F.
  \end{align*}
  Now, we can estimate the integral using the Hölder inequality in $(H)$ by
  \begin{align*}
    &\phantom{=} \intFreqSpace \sum_{i,j=1}^m |g_i(\sigma-s)\adj \unitary  g_j(\sigma)|^2 \dsigma\\  &= \intFreqSpace \left\langle\sum_{j=1}^m (g_j(\sigma)g_j(\sigma)\adj \unitary)\adj  ,\sum_{i=1}^m g_i(\sigma-s) g_i(\sigma-s)\adj \unitary \right\rangle_F \dsigma \\
    &\!\!\overset{(H)}{\le}\left( \intFreqSpace \vecNorm{\sum_{j=1}^m (g_j(\sigma)g_j(\sigma)\adj\unitary)\adj }^2_F \dsigma \right)^{\tfrac{1}{2}} \left(\intFreqSpace \vecNorm{\sum_{i=1}^m g_i(\sigma-s) g_i(\sigma-s)\adj\unitary  }^2_F \dsigma \right)^{\frac{1}{2}}\\
    &=  \intFreqSpace \sum_{k,j=1}^m (g_j(\sigma)\adj\unitary  g_k(\sigma))\adj (g_j(\sigma)\adj \unitary g_k(\sigma)) \dsigma \\
    &= \intFreqSpace \vecNorm{\widetilde\transferFunction(\sigma, -\sigma)}_2^2 \dsigma .
  \end{align*}
  Summarized, we find for all $s \in \imagUnit \R$ that
  \begin{equation*}
    \intFreqSpace \vecNorm{\widetilde \transferFunction(\sigma, s-\sigma)}_2^2 \dsigma \le \intFreqSpace \vecNorm{\widetilde \transferFunction (\sigma, -\sigma)}_2^2 \dsigma,
  \end{equation*}
  and since the right-hand side is independent of $s$, the result follows.
\end{proof}

Using these two preliminary results, we can continue to proof \Cref{thm:central}.

\begin{proof}[Proof of \Cref{thm:central}]
  Starting from the result of \Cref{lem:y-twonorm-bound-Y-onenorm}, we can apply to the $\Lone$-norm of the fiber of $\outVarFreqBiVar_i(\sigma, s-\sigma)$ for fixed $s \in \imagUnit \R$ the Hölder inequality for scalar products on \eqref{eq:out_bivar} 
  \begin{equation*}
    \funNorm{\outVarFreqBiVar_i(\cdot, s-\cdot)}_{\Lone(\imagUnit\R, \C)} \le \funNorm{\widetilde\transferFunction_{i}(\cdot, s-\cdot)}_{\freqSpacetwoC{\inpVarDim^2}} \funNorm{\inpVarFreq(\cdot) \otimes \inpVarFreq(s-\cdot)}_{\freqSpacetwoC{\inpVarDim^2}},
  \end{equation*}
  to yield for the full output
  \begin{equation*}
    \funNorm{\outVar}^2_{\timeSpacetwo{\outVarDim}}  \le \frac{1}{(2\pi)^3}\intFreqSpace \sum_{i=1}^p \funNorm{\widetilde\transferFunction_{ i}(\cdot, s-\cdot)}_{\freqSpacetwoC{\inpVarDim^2}}^2 \funNorm{\inpVarFreq(\cdot) \otimes \inpVarFreq(s-\cdot)}_{\freqSpacetwoC{\inpVarDim^2}}^2 \ds.
  \end{equation*}
  We subsequently apply another Hölder inequality, this time to the $\Lone$-norm of the product of the $\Ltwo$-fiber norms. The precise inequality reads
  \begin{align*}
    & \intFreqSpace\funNorm{\widetilde \transferFunction_{ i}(\cdot, s-\cdot)}^2_{\Ltwo(\imagUnit \R, \C^{m^2})} \funNorm{\inpVarFreq(\cdot ) \otimes \inpVarFreq(s-\cdot)}^2_{\Ltwo(\imagUnit \R, \C^{m^2})}\ds \\
    &\le \sup_{s \in \imagUnit \R}\left( \funNorm{\widetilde \transferFunction_{i}(\cdot, s-\cdot)}^2_{\Ltwo(\imagUnit \R, \C^{m^2})} \right) \intFreqSpace \funNorm{\inpVarFreq(\cdot ) \otimes \inpVarFreq(s-\cdot)}^2_{\Ltwo(\imagUnit \R, \C^{m^2})} \ds,
  \end{align*}
  for which we can compute the second factor as 
  \begin{align*}
    \intFreqSpace \funNorm{\inpVarFreq(\cdot ) \otimes \inpVarFreq(s-\cdot)}^2_{\Ltwo(\imagUnit \R, \C^{m^2})} \ds &=\intFreqSpace \intFreqSpace \vecNorm{\inpVarFreq(\sigma ) \otimes \inpVarFreq(s-\sigma)}_2^2 \dsigma \ds\\
    &\overset{(\ast)}{=}\intFreqSpace \intFreqSpace \vecNorm{\inpVarFreq(\sigma )}_2^2 \vecNorm{\inpVarFreq(s-\sigma)}_2^2 \dsigma \ds\\
    &\!\!\!\!\!\!\!\overset{(\hat{s} \coloneqq s - \sigma)}{=}\intFreqSpace \intFreqSpace \vecNorm{\inpVarFreq(\sigma )}_2^2 \vecNorm{\inpVarFreq(\hat{s})}_2^2 \, \mathrm{d}\hat{s}\dsigma \\
    &= \funNorm{\inpVarFreq}_{\Ltwo(\imagUnit\R, \C^m)}
  \end{align*}
  where in $(\ast)$ we applied the standard norm estimate for the 2-norm of Kronecker products and separated the two integrals after the substitution.
  Finally, this lets us further estimate the integral by
  \begin{align*}
  \funNorm{\outVar}^2_{\timeSpacetwo{\outVarDim}} &\le \frac{1}{(2\pi)^3} \sum_{i=1}^p\left(\sup_{s\in \imagUnit \R} \funNorm{\widetilde \transferFunction_{i}(\cdot, s-\cdot)}_{\freqSpacetwoC{\inpVarDim^2}}\right)^2 \funNorm{\inpVarFreq}_{\freqSpacetwoC{\inpVarDim}}^4 \\
  &\!\!\!\!\!\!\!\!\overset{(\text{Lem.\;}\ref{lem:tf_autocorrelation} )}{\le} \frac{1}{(2\pi)^3} \sum_{i=1}^p \funNorm{\widetilde\transferFunction_{i}(\cdot, -\cdot)}^2_{\freqSpacetwoC{\inpVarDim^2}} \funNorm{\inpVarFreq}_{\freqSpacetwoC{\inpVarDim}}^4 \\
  &= \frac{1}{(2\pi)^3} \funNorm{\widetilde\transferFunction(\cdot, -\cdot)}^2_{\freqSpaceFC{\outVarDim \times \inpVarDim^2}} \funNorm{\inpVarFreq}_{\freqSpacetwoC{\inpVarDim}}^4 \\
  &= \frac{1}{2\pi}\funNorm{\widetilde \transferFunction(\cdot, -\cdot)}^2_{\freqSpaceFC{\outVarDim \times \inpVarDim^2}} \funNorm{\inpVar}_{\timeSpacetwo{\inputDim}}^4,
\end{align*}
where the last equality holds by Parseval's theorem.
\end{proof}

\begin{remark}
  The bound obtained in Theorem~\ref{thm:central} is in general \emph{not} the $\Hinf$-norm of the (associated) univariate transfer function \eqref{eq:assoc_trafo}
  \begin{equation*}
    \funNorm{\transferFunction(s)}_{\Hinf} = \sup_{s \in \C_+} \vecNorm{\frac{1}{2\pi} \int_{-\imagUnit\infty}^{\imagUnit\infty}\widetilde{\transferFunction}(\sigma, s-\sigma) \dsigma}_2. 
  \end{equation*}
  Even if the system has a single output ($p=1$), the $\Hinf$-norm and the derived scalar bounds coincide except for a (constant) factor of $\sqrt{2\pi}$, since for a row vector the Frobenius norm and the spectral
  norm agree $\vecNorm{\widetilde\transferFunction(\sigma,\,-\sigma)}_{F} =\vecNorm{\widetilde\transferFunction(\sigma,-\sigma)}_{2}.$ The factor $\sqrt{2\pi}$ appears, however, similarly as in the $\Htwo$-norm for \LTI systems.
\end{remark}

\section{Inner product and Sylvester-equation-based computation of the gain bound}

To obtain a computable expression for the output error between two \LQO-systems we first introduce a suitable inner product in \Cref{def:innerprod}. \Cref{thm:lyap_formulation} then shows how this inner product can be evaluated by solving a set of Lyapunov and Sylvester matrix equations. 

\begin{definition}\label{def:innerprod}
  For two \LQO systems \eqref{eq:base} given by $\Sigma_k = (\A_k, \B_k, \Mvec_k)$ with $\A_k$ Hurwitz for $k=1,2$ with bivariate transfer functions $\widetilde \transferFunction_1, \widetilde \transferFunction_2$ of the form \eqref{eq:tf_only_M}, we define the following inner product 
  \begin{align}
    \left\langle \widetilde \transferFunction_1, \widetilde \transferFunction_2 \right\rangle_{\LF((s, -s) \mid s \in \imagUnit \R)}&\coloneqq \intFreqSpace \sum_{i=1}^p \trace\left(\left[\B_1 \otimes \B_1\right]^\T \systemMat{Z}_{1,i}^\T (s) \systemMat{Z}_{2,i}(s) \left[ \B_2 \otimes \B_2\right]\right)   \ds \notag \\
    \mathrm{with}\qquad \systemMat{Z}_{k,i}(s) &\coloneqq \mathbf{e}_i^\T \Mvec_{k} \left[(s\I - \A_k)^{-1} \otimes (-s\I - \A_k)^{-1}\right], \quad \text{for}\; k=1,2. \label{eq:inner_prod}
  \end{align}
\end{definition}

\begin{remark}
  One can verify that \eqref{eq:norm2} is equal to \eqref{eq:GFNorm} as well as that \eqref{eq:inner_prod} defines the corresponding inner product, analogously to \cite[Def. 3.1]{BenGD21}. The induced norm is then given by
  \begin{equation}\label{eq:norm2}
    \funNorm{ \widetilde \transferFunction }^2_{\LF((s, -s) \mid s \in \imagUnit \R)} \coloneqq \left\langle \widetilde \transferFunction, \widetilde \transferFunction\right\rangle_{\LF((s, -s) \mid s \in \imagUnit \R)} .
  \end{equation}
\end{remark}

The inner product can be evaluated by solving the matrix equations stated in the following theorem.

\begin{theorem}\label{thm:lyap_formulation}
  For two \LQO systems \eqref{eq:base} given by $\Sigma_k = (\A_k, \B_k, \Mvec_k)$ with state dimensions $n_k$ and bivariate transfer functions $\widetilde \transferFunction_k$ for $k=1,2$, the inner product from \Cref{def:innerprod} is given by
  \begin{align*}
   \left\langle \widetilde \transferFunction_1, \widetilde \transferFunction_2 \right\rangle_{\LF((s, -s) \mid s \in \imagUnit \R)} &=  -2 \pi \sum_{i=1}^p \trace \left(\B^\T_2\lyapM_{2,i} (\lyapBBAux +\lyapBB ) \lyapM_{1,i}\B_1\right)
  \end{align*}
  where $\lyapBB \in \R^{n_2 \times n_1}, \lyapBBAux \in \R^{n_2 \times n_1}, \text{ and } \lyapM_{k,i } \in \R^{n_k \times n_k}$ for $k=1,2$ and $i=1,...,p$ solve the Sylvester equations
  \begin{subequations}
    \begin{align}
      \A_k \lyapM_{k,i} + \lyapM_{k,i} \A_k^\T &= \M_{k,i},\label{eq:lyapM}\\
      {\A_2}^\T \lyapBB + \lyapBB \A_1 &= \B_2\B_1^\T,\label{eq:lyapBB}\\
      {\A_2} \lyapBBAux + \lyapBBAux \A_1^\T &= \B_2\B_1^\T,\label{eq:lyapBBAux}
    \end{align}
  \end{subequations}
  where $\vecc(\M_{k,i})^\T = \mathbf{e}_i^\T \Mvec_{k}$.
\end{theorem}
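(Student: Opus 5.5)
The plan is to reduce the frequency integral in \Cref{def:innerprod} to integrals of products of two resolvents, and then evaluate those integrals with the time-domain (Parseval) representation of Sylvester solutions. \emph{First step:} fix $i$ and rewrite the integrand as a trace of $m\times m$ matrices, exactly as in \eqref{eq:fullvectf}. Using $(\B\otimes\B)$, the symmetry of $\M_{k,i}$ and $\vecc(\mathbf{X}\mathbf{Y}\mathbf{W}) = (\mathbf{W}^\T\otimes \mathbf{X})\vecc(\mathbf{Y})$, define
\begin{equation*}
\widehat{\transferFunction}_{k,i}(s) \coloneqq \B_k^\T(-s\I-\A_k^\T)^{-1}\M_{k,i}(s\I-\A_k)^{-1}\B_k .
\end{equation*}
Then $\systemMat{Z}_{k,i}(s)[\B_k\otimes\B_k] = \vecc(\widehat{\transferFunction}_{k,i}(s))^\T$, possibly up to a transpose of the $m\times m$ block, which does not affect the trace. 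The $i$-th summand thus becomes $\tr\bigl(\widehat{\transferFunction}_{1,i}(s)^\T\widehat{\transferFunction}_{2,i}(s)\bigr)$. For $s\in\imagUnit\R$ the transpose of the real-coefficient rational function equals the conjugate transpose evaluated at $-s$. This symmetry is used below to pair the resolvents correctly.

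\emph{Second step:} split each $\widehat{\transferFunction}_{k,i}$ additively using the Lyapunov solution $\lyapM_{k,i}$ from \eqref{eq:lyapM}. From $\A\lyapM+\lyapM\A^\T=\M$ one gets $(s\I-\A)\lyapM + \lyapM(-s\I-\A^\T) = -\M$. Multiplying from the left and right by the appropriate resolvents (in the ordering dictated by the vec convention) gives the partial-fraction identity
\begin{equation*}
(\pm s\I-\A^{(\T)})^{-1}\M(\mp s\I-\A^{(\T)})^{-1} = -\bigl[\lyapM(\cdots)^{-1} + (\cdots)^{-1}\lyapM\bigr].
\end{equation*}
Consequently each $\widehat{\transferFunction}_{k,i}(s)$ is a sum of two terms. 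One contains only a resolvent in $s$ (analytic in the closed right half-plane), with $\lyapM_{k,i}\B_k$ on one side. The other contains only a resolvent in $-s$ (analytic in the closed left half-plane). Multiplying out $\tr(\widehat{\transferFunction}_{1,i}^\T\widehat{\transferFunction}_{2,i})$ produces four terms. Each has the form $\tr(\B_2^\T\lyapM_{2,i}\,R_2(\pm s)\,\B_2\B_1^\T\,R_1(\pm s)\,\lyapM_{1,i}\B_1)$, where $R_k$ is a resolvent of $\A_k$ or $\A_k^\T$. The factor $\B_2\B_1^\T$ appears because the trace lets us move $\B_1$ next to $\B_2$.

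\emph{Third step:} evaluate the four integrals. If both resolvents have their poles on the same side of the axis, the integrand is analytic on one half-plane and decays like $|s|^{-2}$, so closing the contour there shows the integral vanishes. For the two mixed terms, write the resolvents as Laplace transforms of $\mexp{\A t}$ and use Parseval, or equivalently the residue theorem. This gives
\begin{equation*}
\intFreqSpace (s\I-\A_2^\T)^{-1}\B_2\B_1^\T(-s\I-\A_1)^{-1}\ds
= 2\pi \int_0^\infty \mexp{\A_2^\T t}\B_2\B_1^\T\mexp{\A_1 t}\dt = -2\pi\,\lyapBB,
\end{equation*}
since $\lyapBB$ solves \eqref{eq:lyapBB}. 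The analogous mixed term with $\A_2,\A_1^\T$ gives $-2\pi\lyapBBAux$ via \eqref{eq:lyapBBAux}. Collecting both mixed terms and summing over $i$ yields $-2\pi\sum_i\tr(\B_2^\T\lyapM_{2,i}(\lyapBBAux+\lyapBB)\lyapM_{1,i}\B_1)$.

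The main obstacle is the bookkeeping in the first two steps. I have to pin down exactly which of $\A_k$ or $\A_k^\T$ appears in each resolvent after the Kronecker/vec rewriting and the transpose versus conjugate relation. Only then does the Lyapunov equation with $\A_k\lyapM+\lyapM\A_k^\T$ (rather than $\A_k^\T\lyapM+\lyapM\A_k$) match, and the mixed terms land precisely on \eqref{eq:lyapBB} and \eqref{eq:lyapBBAux}. The second point to handle carefully is the $\imagUnit$ from $\ds$ along $\imagUnit\R$, which fixes the overall sign and the $2\pi$. I would settle both by checking the scalar case $n=m=1$ first and then transferring the ordering to the general case.
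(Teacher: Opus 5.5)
Your route is the paper's route: split each factor with a Lyapunov solution, discard the two products analytic in one half-plane, and evaluate the two cross terms as Sylvester solutions. However, the step you postpone as bookkeeping is exactly where the argument breaks, and it cannot be repaired, because for non-symmetric $\A_k$ the stated identity is false in general.

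Your $\widehat{\transferFunction}_{k,i}(s)$ has $(-s\I-\A_k^{\T})^{-1}$ to the left of $\M_{k,i}$ and $(s\I-\A_k)^{-1}$ to the right. The partial-fraction splitting of such a product is
\[
(-s\I-\A^{\T})^{-1}\M(s\I-\A)^{-1} = -\bigl[\lyapM(s\I-\A)^{-1}+(-s\I-\A^{\T})^{-1}\lyapM\bigr] \quad\text{with}\quad \A^{\T}\lyapM+\lyapM\A=\M .
\]
The identity you derive from \eqref{eq:lyapM} instead splits $(s\I-\A)^{-1}\M(-s\I-\A^{\T})^{-1}$, which has the resolvent of $\A$ rather than of $\A^{\T}$ on the left. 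Neither transposing $\widehat{\transferFunction}_{k,i}$ nor substituting $s\mapsto -s$ turns one form into the other.

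With the correct splitting, $\widehat{\transferFunction}_{k,i}(s)=-\bigl(P_k(s)+P_k(-s)^{\T}\bigr)$ with $P_k(s)=\B_k^{\T}\lyapM_{k,i}(s\I-\A_k)^{-1}\B_k$. For $m=1$, or for the Hermitian pairing discussed below, both surviving cross terms couple, up to transposition, a resolvent of $\A_2$ with one of $\A_1^{\T}$ across $\B_2\B_1^{\T}$. Each therefore contributes $-2\pi\trace\bigl(\B_2^{\T}\lyapM_{2,i}\lyapBBAux\lyapM_{1,i}\B_1\bigr)$. The coupling of $\A_2^{\T}$ with $\A_1$ that would produce $\lyapBB$ never occurs; you attributed it to one cross term only because the target formula needs it. What the computation actually gives is $-4\pi\sum_{i}\trace\bigl(\B_2^{\T}\lyapM_{2,i}\lyapBBAux\lyapM_{1,i}\B_1\bigr)$ with $\A_k^{\T}\lyapM_{k,i}+\lyapM_{k,i}\A_k=\M_{k,i}$.

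Concretely, take $p=m=1$, $\Sigma_1=\Sigma_2$, $\A=\begin{bmatrix}-1&c\\0&-2\end{bmatrix}$, $\B=\begin{bmatrix}0&1\end{bmatrix}^{\T}$, $\M=\mathbf{e}_1\mathbf{e}_1^{\T}$ with $c\neq 0$.
\begin{itemize}
\item Then $\widetilde{\transferFunction}(s,-s)=c^2/\bigl((1-s^2)(4-s^2)\bigr)$, and the left-hand side equals $11\pi c^4/432>0$.
\item By contrast, \eqref{eq:lyapM} gives $\lyapM=\mathrm{diag}(-1/2,0)$, hence $\lyapM\B=0$ and the claimed right-hand side vanishes.
\item The corrected formula above returns $11\pi c^4/432$.
\end{itemize}
Your plan to fix the ordering via the scalar case $n=m=1$ cannot detect this, since there $\A=\A^{\T}$ and $\lyapBB=\lyapBBAux$.

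The paper's proof has the same defect. Its step $(*)$ identifies $\vecc(\M_{k,i})^{\T}[\I\otimes\A_k+\A_k\otimes\I]^{-1}$ with $\vecc(\lyapM_{k,i})^{\T}$, which characterizes $\A_k^{\T}\lyapM_{k,i}+\lyapM_{k,i}\A_k=\M_{k,i}$ rather than \eqref{eq:lyapM}. Its rewriting of $T_i$ and $S_i$ then misplaces transposes, which is where $\lyapBB$ comes from.

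Finally, for $m>1$ your claim that all four terms take the form $\trace(\B_2^{\T}\lyapM_{2,i}R_2\B_2\B_1^{\T}R_1\lyapM_{1,i}\B_1)$ fails for the plain transpose in \Cref{def:innerprod}. On $\imagUnit\R$ the matrices $\widehat{\transferFunction}_{k,i}$ are Hermitian rather than real, so $\trace(\widehat{\transferFunction}_{1,i}^{\T}\widehat{\transferFunction}_{2,i})$ is not the Frobenius pairing. In its cross terms the two resolvents are separated on each side by a product containing one of the $\lyapM_{k,i}$, so they do not reduce to Sylvester equations with right-hand side $\B_2\B_1^{\T}$. The Sylvester formula holds for $\trace(\widehat{\transferFunction}_{1,i}^{\ast}\widehat{\transferFunction}_{2,i})$, which matches \eqref{eq:GFNorm} and agrees with the definition when $m=1$. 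For instance, $\A=\mathrm{diag}(-1,-2)$, $\B=\I$, $\M=\mathbf{e}_1\mathbf{e}_2^{\T}+\mathbf{e}_2\mathbf{e}_1^{\T}$ gives $8\pi/27$ for the definition as written, but $\pi/3$ both for the claimed right-hand side and for \eqref{eq:GFNorm}.
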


The proof requires a matrix-valued partial-fraction decomposition of the Kronecker product of the residuals presented in the subsequent lemma for which the proof can be found in the appendix \Cref{app:partial-fraction}.

\begin{proposition}\label{lem:partial-fraction-decomposition}
  For $\A \in \R^{\stateDim \times \stateDim}$ Hurwitz and $s \in \C\setminus (\sigma(\A) \cup \sigma(-\A))$, where $\sigma(\A)$ denotes the set of eigenvalues of $\A$, the matrix $\left[\I \otimes \A + \A \otimes \I\right]$ is invertible and it holds that
  \begin{equation*}
    \left[(s\I - \A)^{-1} \otimes (-s\I - \A)^{-1}\right]  =  \left[\I \otimes \A + \A \otimes \I\right]^{-1} \left[-(s\I - \A)^{-1} \otimes \I - \I \otimes (-s \I - \A)^{-1}\right] .
  \end{equation*}
\end{proposition}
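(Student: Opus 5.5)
The proof splits into two parts: invertibility of $\I \otimes \A + \A \otimes \I$, and a direct verification of the identity by multiplying through.

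For invertibility, recall that the eigenvalues of the Kronecker sum $\I \otimes \A + \A \otimes \I$ are exactly the sums $\lambda_j + \lambda_k$ with $\lambda_j, \lambda_k \in \sigma(\A)$. This follows, for instance, from a Schur triangularization $\A = \systemMat{U}\systemMat{T}\systemMat{U}^\ast$, since then $\I \otimes \A + \A \otimes \I = (\systemMat{U}\otimes\systemMat{U})(\I\otimes\systemMat{T} + \systemMat{T}\otimes\I)(\systemMat{U}\otimes\systemMat{U})^\ast$ and the middle factor is upper triangular. Because $\A$ is Hurwitz, every such sum satisfies $\real(\lambda_j+\lambda_k)<0$. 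In particular $0$ is not an eigenvalue, so the matrix is invertible. Likewise, the hypothesis $s \notin \sigma(\A)\cup\sigma(-\A)$ ensures that both resolvents $(s\I-\A)^{-1}$ and $(-s\I-\A)^{-1}$ exist.

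For the identity itself, I would multiply both sides from the left by $\I \otimes \A + \A \otimes \I$. It then suffices to prove
\begin{equation*}
  \left[\I \otimes \A + \A \otimes \I\right]\left[(s\I - \A)^{-1} \otimes (-s\I - \A)^{-1}\right] = -(s\I - \A)^{-1} \otimes \I - \I \otimes (-s \I - \A)^{-1}.
\end{equation*}
The key step is to rewrite the Kronecker sum using the resolvent arguments:
\begin{equation*}
  \I \otimes \A + \A \otimes \I = -\left[(s\I-\A)\otimes \I + \I \otimes (-s\I-\A)\right].
\end{equation*}
This holds because $(s\I-\A)\otimes\I + \I\otimes(-s\I-\A) = s\I - \A\otimes\I - s\I - \I\otimes\A$, and the $s\I$ terms cancel.

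Next, I apply the mixed-product rule $(\systemMat{X}\otimes\systemMat{Y})(\systemMat{Z}\otimes\systemMat{W}) = \systemMat{X}\systemMat{Z}\otimes\systemMat{Y}\systemMat{W}$ to each of the two summands:
\begin{align*}
  \left[(s\I-\A)\otimes\I\right]\left[(s\I - \A)^{-1} \otimes (-s\I - \A)^{-1}\right] &= \I\otimes(-s\I-\A)^{-1},\\
  \left[\I\otimes(-s\I-\A)\right]\left[(s\I - \A)^{-1} \otimes (-s\I - \A)^{-1}\right] &= (s\I-\A)^{-1}\otimes\I.
\end{align*}
Adding these and including the leading minus sign gives exactly the right-hand side above. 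Multiplying by the inverse of the Kronecker sum then yields the claim.

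I do not expect a serious obstacle. The only points needing care are the Kronecker ordering conventions and the sign bookkeeping. One should check that the Kronecker factor paired with $(s\I-\A)$ is the left factor, consistent with $(s\I - \A)^{-1}$ appearing on the left in the claim, and that the inverse commutes correctly. For the latter, note that the Kronecker sum is a linear combination of $(s\I-\A)\otimes\I$ and $\I\otimes(-s\I-\A)$, each of which commutes with $(s\I-\A)^{-1}\otimes(-s\I-\A)^{-1}$. Hence the inverse may be placed on either side, which matches the statement as written.
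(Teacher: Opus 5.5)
Your proof is correct and follows the paper's route: invertibility from the eigenvalues $\lambda_i+\lambda_j$ of $\I\otimes\A+\A\otimes\I$ with $\A$ Hurwitz, then left-multiplying by the Kronecker sum and applying the mixed-product rule. The only difference is that you first rewrite the Kronecker sum as $-\left[(s\I-\A)\otimes\I+\I\otimes(-s\I-\A)\right]$, so the $s$-terms cancel before multiplying; the paper instead expands $\A$ inside each Kronecker factor and cancels them afterwards, which is a cosmetic difference.
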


\begin{proof}[Proof of \Cref{thm:lyap_formulation}]
  The partial fraction decomposition \Cref{lem:partial-fraction-decomposition} applied to $\systemMat{Z}_{k,i}(s)$ defined in \eqref{eq:inner_prod} yields
  \begin{align*}
    \systemMat{Z}_{k,i}(s) &= \Mvec_{k,i} \left[(s\I - \A_k)^{-1} \otimes (-s\I - \A_k)^{-1}\right]\\
    &= \vecc(\M_{k,i})^\T \left[\I \otimes \A_k + \A_k \otimes \I\right]^{-1} \left[-(s\I - \A_k)^{-1} \otimes \I - \I \otimes (-s\I - \A_k)^{-1}\right]\\
    &\overset{(*)}{=} \vecc(\lyapM_{k,i})^\T \left[-(s\I - \A_k)^{-1} \otimes \I - \I \otimes (-s\I - \A_k)^{-1}\right],
  \end{align*}
  where we inserted in $(*)$ the Kronecker-vectorization identity \cite[eq.~(273)]{PetP12} for the solution of the Lyapunov equation \eqref{eq:lyapM}. We emphasize that $\lyapM_{k,i}$ is symmetric since $\M_{k,i}$ is symmetric.

  Hence, by introducing $\B_k^{\otimes2} \coloneqq \left[\B_k \otimes \B_k\right]$ and  
  \begin{align*}
    T_i(s) &\coloneqq \trace\left({\B_1^{\otimes2}}^\T \left[(s\I - \A_1^\T)^{-1} \otimes \I \right] \vecc(\lyapM_{1,i}) \vecc(\lyapM_{2,i})^\T \left[\I \otimes (-s\I -\A_2)^{-1}\right]\B_2^{\otimes2}\right)\\
    S_i(s) & \coloneqq \trace\left({\B_1^{\otimes2}}^\T\left[\I \otimes (-s\I - \A_1^\T)^{-1} \right] \vecc(\lyapM_{1,i}) \vecc(\lyapM_{2,i})^\T \left[(s\I -\A_2)^{-1}\otimes \I\right]\B_2^{\otimes2}\right),
  \end{align*} 
  we can decompose the inner product as follows
  \begin{equation*}
    \left\langle \widetilde \transferFunction_1, \widetilde \transferFunction_2 \right\rangle_{\Ltwo(\imagUnit \R)} = \intFreqSpace \sum_{i=1}^p T_i(s)+S_i(s) \ds.
  \end{equation*}
  Therein, we used eliminated all summands which are analytic in the same (left or right) complex half plane leveraging a suitable path integral and the residue theorem (see \cite{ReiGDG25} for details). Now, we can further evaluate the summands to
  \begin{align*}
    T_i(s) &= -\trace\left(\vecc\left(\B_1^\T \lyapM_{1,i} (s\I - \A_1)^{-1} \B_1\right) \vecc\left(\B_2^\T \lyapM_{2,i} (s\I + {\A_2})^{-1}\B_2\right)^\T\right)\\
    &\overset{}{=} -\trace\left(\B_2^\T  (s\I + \A_2^\T)^{-1}\lyapM_{2,i} \B_2 \B_1^\T \lyapM_{1,i} (s\I - \A_1)^{-1} \B_1 \right)\\
    &\overset{(*)}{=} -\trace\left(\B_2^\T \lyapM_{2,i} (s\I + \A_2)^{-1} \B_2 \B_1^\T (s\I-\A_1^\T)^{-1} \lyapM_{1, i} \B_1\right)\\
    S_i(s) &= -\trace\left(\vecc\left(\B_1^\T  (s\I + \A_1)^{-1} \lyapM_{1,i}\B\right) \vecc(\B_2^\T  (s\I - {\A_2})^{-1}\lyapM_{2,i}\B)^\T\right)\\
    &\overset{}{=} -\trace\left(\B_2^\T \lyapM_{2,i}  (s\I - \A_2^\T)^{-1}\B_2 \B_1^\T (s\I + \A_1)^{-1} \lyapM_{1,i} \B_1 \right),
  \end{align*}
  where we applied in $(\ast)$ a transpose to the argument of the trace and the cyclic permutation property, i.e., for three matrices $\A, \B, \Cm$ it holds that $\trace(\A \B \Cm) = \trace(\Cm \A \B) = \trace(\B \Cm \A)$. The integral over $T_i$ then yields
  \begin{align}
    \intFreqSpace T_{i}(s) \ds&= -\trace\left(\B_2^\T \lyapM_{2,i}\left[\intFreqSpace  (s\I + \A_2)^{-1} \B_2 \B_1^\T (s\I-\A_1^\T)^{-1}  \ds \right]\lyapM_{1, i} \B_1\right) \notag\\
    &= -2\pi\trace\left({\B_2}^\T\lyapM_{2,i} \lyapBBAux \lyapM_{1,i} \B_1\right) \label{eq:T_i_result}
  \end{align}
  where $\lyapBBAux$ solves the Sylvester equation \eqref{eq:lyapBBAux} \cite{Wim16}. The factor $2\pi$ comes from using the integral representation in the frequency domain instead of the time domain for the solution to the Sylvester equation. Similarly, for the integral over $S_i$, we get 
  \begin{align}
    \intFreqSpace S_{i}(s) \ds &= -2\pi\trace\left(\B_2 \lyapM_{2,i} \lyapBB \lyapM_{1,i} \B_1\right) \label{eq:S_i_result}
  \end{align}
  where $\lyapBB$ solves \eqref{eq:lyapBB}. Substituting \eqref{eq:T_i_result} and \eqref{eq:S_i_result} into the expression for the inner product completes the proof. 
\end{proof}

\begin{remark}
  Let two \LQO systems \eqref{eq:base} $\Sigma_1 = (\A_1, \B_1, \Mvec_1)$ and $\Sigma_2 = (\A_2, \B_2, \Mvec_2)$ be given, then the $\Ltwo((s, -s) \mid s \in \imagUnit\R)$-error can be computed as 
  \begin{multline*}
    \funNorm{ \widetilde\transferFunction_1- \widetilde\transferFunction_2 }^2_{\Ltwo((s, -s) \mid s \in \imagUnit\R)} = \\\funNorm{\widetilde\transferFunction_1}_{\Ltwo((s, -s) \mid s \in \imagUnit\R)}^2 + \funNorm{ \widetilde{\transferFunction}_2 }_{\Ltwo((s, -s) \mid s \in \imagUnit\R)}^2 - 2 \left\langle \widetilde \transferFunction_1, \transferFunctionSecond_2 \right\rangle_{\Ltwo((s, -s) \mid s \in \imagUnit\R)}.
  \end{multline*}
\end{remark}

\begin{remark}
  Inspired by the $\Htwo$-bounding balanced truncation algorithm for \LQO systems in \cite{BenGD21}, the Sylvester-based formulation in \Cref{thm:lyap_formulation} can yield an $\Ltwo$-$\Ltwo$-oriented \BT in case the observability energy can be expressed through the matrices $\lyapBB$ and $\lyapM_i$. To date we have not found a relation that can be exploited for that purpose.
\end{remark}

\section{Bilinear outputs}

In this section, we will consider systems which have only bilinear outputs, i.e., systems of the form 
\begin{equation}\label{eq:onlyR}
  \system_{\Rvec} \; \left\lbrace \;\begin{aligned}
    \dot{\state}(t) &= \A \state(t) + \B \inpVar(t), \\
    \outputVar(t) &=  \Rvec (\inpVar(t) \otimes \state(t)),
  \end{aligned} \right.
\end{equation} 
for $\A \in \R^{\stateDim\times \stateDim}$ Hurwitz, $\B \in \R^{\stateDim \times \inputDim}$, and $\Rvec \in \R^{\outVarDim \times \stateDim \inputDim}$.

\begin{theorem} \label{thm:onlyR}
  For a linear time-invariant system with bilinear outputs $\system_{\Rvec} =(\A, \B, \Rvec)$, the $\Ltwo$-$\Ltwo$ gain can be bounded by 
  \begin{equation*}
    \funNorm{y}_{\timeSpacetwo{p}}^2 \le \left( \frac{1}{2\pi}\sum_{i=1}^p \funNorm{\Rm_i (s\I-\A)^{-1}\B}_{\freqSpacetwoC{m\times m}}^2 \right) \funNorm{u}_{\timeSpacetwo{m}}^4, \label{eq:RP}
  \end{equation*}
  where $\Rm_i = \vtf( \Rvec^\T \mathbf{e}_i) \in \R^{\stateDim \times \inputDim}$.
\end{theorem}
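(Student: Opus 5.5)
The plan is to avoid the frequency-domain machinery of \Cref{thm:central} and argue directly in the time domain. The bilinear output decouples into an instantaneous factor $\inpVar(t)$ times a linear output of the state, which is a standard \LTI output. First I write each component of the output in matrix form. With $\Rm_i = \vtf(\Rvec^\T \mathbf{e}_i)$ and the vectorization convention used in \eqref{eq:fullvectf}, I expect
\begin{equation*}
  \outVar_i(t) = \mathbf{e}_i^\T \Rvec (\inpVar(t)\otimes \state(t)) = \inpVar(t)^\T z_i(t), \qquad z_i(t) \coloneqq \Rm_i^\T \state(t) \in \R^{m}
\end{equation*}
(or the transposed variant $\state(t)^\T \Rm_i \inpVar(t)$, which is the same scalar). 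Each $z_i$ is then the output of the \LTI system $(\A,\B,\Rm_i^\T)$, with kernel $\Rm_i^\T \mexp{\A t}\B$ and transfer function $\Rm_i^\T(s\I-\A)^{-1}\B$.

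Second, the Cauchy--Schwarz inequality in $\R^m$ gives $|\outVar_i(t)|^2 \le \vecNorm{\inpVar(t)}_2^2 \vecNorm{z_i(t)}_2^2$. Integrating over $t$ yields
\begin{equation*}
  \funNorm{\outVar_i}_{\Ltwo}^2 \le \Big(\sup_{t\ge0}\vecNorm{z_i(t)}_2^2\Big)\funNorm{\inpVar}_{\timeSpacetwo{m}}^2 .
\end{equation*}
This reduces the problem to the classical $\Ltwo$-$\Linfty$ gain of an \LTI system.

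Third, I bound that gain. Write $z_i(t) = \int_0^t \Rm_i^\T \mexp{\A\tau}\B\,\inpVar(t-\tau)\dtau$ and apply Cauchy--Schwarz row by row. This gives
\begin{equation*}
  \vecNorm{z_i(t)}_2^2 \le \int_0^\infty \vecNorm{\Rm_i^\T \mexp{\A\tau}\B}_F^2 \dtau \; \funNorm{\inpVar}_{\timeSpacetwo{m}}^2 .
\end{equation*}
Since $\A$ is Hurwitz, the kernel lies in $\Ltwo$. Parseval's theorem, applied entrywise, converts the kernel integral into $\frac{1}{2\pi}\funNorm{\Rm_i^\T(s\I-\A)^{-1}\B}_{\freqSpacetwoC{m\times m}}^2$, where the norm is understood as the integral of the Frobenius norm. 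Because the Frobenius norm is invariant under transposition, this quantity agrees with the expression in the statement, whichever of $\Rm_i$ or $\Rm_i^\T$ the convention produces.

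Finally, I combine the three steps and sum over $i=1,\dots,p$, using $\funNorm{\outVar}^2 = \sum_i \funNorm{\outVar_i}^2$, to obtain the claimed bound with the fourth power of $\funNorm{\inpVar}$. The only step needing care is the index bookkeeping in the first step. I must check that $\vtf(\Rvec^\T\mathbf{e}_i)$ has the correct orientation relative to $\inpVar\otimes\state$, so that the state is paired with the columns and the input with the rows. Even if the orientation turns out transposed, the Frobenius invariance noted above makes the final bound unaffected.
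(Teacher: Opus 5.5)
Your proof is correct and takes essentially the paper's route. You split off $\inpVar(t)$ by Cauchy--Schwarz, bound $\sup_t$ of the linear output $\Rm_i^{\T}\state(t)$ (the $\Ltwo$--$\Linfty$ gain of an \LTI system) by Cauchy--Schwarz against the kernel, and finish with Parseval. Your row-wise Frobenius bound makes the Parseval step an exact identity, whereas the paper bounds the kernel in the spectral norm, so its ``$=$'' at that step is really an inequality.

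One correction concerns your closing remark. Frobenius invariance under transposition relates $\Rm_i^{\T}(s\I-\A)^{-1}\B$ to $\B^{\T}(s\I-\A^{\T})^{-1}\Rm_i$, not to $\Rm_i(s\I-\A)^{-1}\B$. The orientation genuinely matters: for $n=m=2$, $\A=-\I$, $\B=\mathbf{e}_1\mathbf{e}_1^{\T}$, $\Rm_i=\mathbf{e}_1\mathbf{e}_2^{\T}$, one of these transfer functions vanishes and the other does not. So the statement's $\Rm_i(s\I-\A)^{-1}\B$ has to be read with $\Rm_i$ as the $m\times n$ matrix satisfying $\outVar_i=\inpVar^{\T}\Rm_i\state$. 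That is exactly your $\Rm_i^{\T}$, and exactly how the paper's proof uses it.
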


\begin{proof}
  We analyze the output first in the time domain and use that we can write each output $\mathbf{e}_i^\T\outputVar(\tau_1, \tau_2)$ by integrating the integral over $\tau_2$ using the occurring Kronecker delta 
  \begin{align}
    \mathbf{e}_i^\T\outputVar(t) &=  \int_0^t \int_0^t \mathbf{e}_i^\T \Rvec \left[\mexp{\A \tau_1} \B \otimes \I \delta(\tau_2)\right]\left[\inpVar(t-\tau_1) \otimes \inpVar(t-\tau_2)\right] \dtau_2 \dtau_1 \notag\\
    &=  \int_0^t \vecc(\Rm_i)^\T \left[\mexp{\A \tau_1} \B \otimes \I \right]\left[\inpVar(t-\tau_1) \otimes \inpVar(t)\right] \dtau_1 \notag\\
    &= \int_0^t \vecc(\Rm_i)^\T\left[\mexp{\A \tau_1} \B\inpVar(t-\tau_1)  \otimes \inpVar(t) \right]\dtau_1\notag\\
    &= \int_0^t \vecc\left(\inpVar^\T(t) \Rm_i \mexp{\A \tau_1} \B \inpVar(t-\tau_1)\right) \dtau_1 \notag \\
    &= \int_0^t \inpVar^\T(t) \Rm_i \mexp{\A \tau_1} \B \inpVar(t-\tau_1) \dtau_1 \label{eq:tobeadapted}
  \end{align}
  where we further for the last equality that the argument of the vectorization is a scalar. For the norm computation we then yield
  \begin{align*}
    \funNorm{\outVar}^2_{\timeSpacetwo{p}} &= \int_0^\infty \vecNorm{\sum_{i=1}^p \mathbf{e}_i \inpVar^\T(t) \left(\int_0^t  \Rm_i \mexp{\A \tau_1} \B \inpVar(t-\tau_1) \dtau_1 \right)}_2^2 \dt  \\ 
    &\!\!\!\overset{(CS)}{\le} \int_0^\infty \sum_{i=1}^p \vecNorm{\Rm_i\int_0^t \mexp{\A \tau_1}\B \inpVar(t-\tau_1)\dtau_1}_2^2 \vecNorm{\inpVar(t)}_2^2 \dt \\
    &\le \left( \sup_{t \in [0, \infty)}  \sum_{i=1}^p \vecNorm{\Rm_i\int_0^t \mexp{\A \tau_1}\B \inpVar(t-\tau_1)\dtau_1 }_2^2 \right) \funNorm{\inpVar}_{\timeSpacetwo{m}}^2,
  \end{align*} 
  where we employed the Cauchy-Schwarz inequality in $(CS)$ and the monotonicity of the integral in the last inequality. Further computation yields by application of the triangle $\trineg$ and the Cauchy-Schwarz inequality $(CS)$
  \begin{align*}
    \vecNorm{\int_0^t \Rm_i \mexp{\A \tau} \B \inpVar(t-\tau) \ds}_2 &\overset{\trineg}{\le}\int_0^t \vecNorm{\Rm_i \mexp{\A \tau} \B \inpVar(t-\tau)}_2 \dtau \\
    & \overset{}{\le}\int_0^t \vecNorm{\Rm_i \mexp{\A \tau}\B}_2 \vecNorm{\inpVar(t-\tau)}_2 \dtau \\
    & \!\!\overset{(CS)}{\le}\left(\int_0^t \vecNorm{\Rm_i \mexp{\A \tau}\B}^2_2 \ds \right)^{\tfrac{1}{2}}\left( \int_0^t \vecNorm{\inpVar(t-\tau)}^2_2 \dtau \right)^{\tfrac{1}{2}}.
  \end{align*}
  From here, it is clear that we can estimate the supremum over $t \in [0, \infty)$ by 
  \begin{align*}
    &\phantom{=}\sup_{t \in [0, \infty)}\sum_{i=1}^p \vecNorm{\Rm_i\int_0^t \mexp{\A \tau}\B \inpVar(t-\tau)\dtau }_2^2 \notag \\ &\le \sum_{i=1}^p \left(\int_0^\infty \vecNorm{\Rm_i \mexp{\A \tau}\B}^2_2 \dtau \right)\left( \int_0^\infty \vecNorm{\inpVar(\tau)}^2_2 \dtau \right) \notag \\
    &\overset{(P)}{=} \left( \frac{1}{2\pi}\sum_{i=1}^p \funNorm{\Rm_i (s\I-\A)^{-1}\B}_{\freqSpacetwoC{m\times m}}^2 \right) \funNorm{u}_{\timeSpacetwo{m}}^2,
  \end{align*}
  which yields the desired result.
\end{proof}

\begin{remark}
  In the result \Cref{thm:onlyR} with one output $p=1$ on can identify the $\Ltwo$-$\Ltwo$-gain bound as the $\Htwo$-norm of an \LTI system $\system_\Rm = (\A, \B, \Rm)$.
\end{remark}

\section{Quadratic throughput}
\label{sec:throughput}

Finally, we consider an \LTI system with outputs depending only quadratically on $\inpVar$
\begin{equation}\label{eq:onlyP}
  \system_{\Pvec} \; \left\lbrace \;\begin{aligned}
    \dot{\state}(t) &= \A \state(t) + \B \inpVar(t), \\
    \outputVar(t) &=  \Pvec (\inpVar(t) \otimes \inpVar(t)).
  \end{aligned} \right.
\end{equation} 
We observe in the following that assuming $u\in \timeSpacetwo{m}$ is not sufficient to have a bounded output in the $\Ltwo$-norm. To clarify this, we compute the $\Ltwo$-norm of the output as
\begin{align*}
  \funNorm{\outVar}_{\timeSpacetwo{p}}^2 &= \int_0^\infty \vecNorm{\Pvec (\inpVar(t) \otimes \inpVar(t))}_2^2 \dt \\
  &\le \vecNorm{\Pvec}_2^2 \int_0^\infty \vecNorm{\inpVar(t) \otimes \inpVar(t)}_2^2 \dt, 
\end{align*}  
which corresponds for the single output case to the existence of a bound $c_{2, \Pvec} \in \R_+$ such that
\begin{equation*}
  \funNorm{\outVar}_{\timeSpacetwo{p}} \le c_{2,\Pvec}\funNorm{\inpVar}_{\timeSpacefour{p}}^2 \le c_{2, \Pvec}\funNorm{\inpVar}_{\timeSpacetwo{p}}\funNorm{\inpVar}_{\timeSpaceinf{p}}.
\end{equation*}

Hence, presuming that  $u \in \timeSpacetwo{m}\cap \timeSpaceinf{m}$ is a natural assumption for computing gain bounds with quadratic throughput terms. We compute this also involving bilinear terms, i.e., considering systems of type 
\begin{equation}\label{eq:PR}
  \system_{\Rvec, \Pvec} \; \left\lbrace \;\begin{aligned}
    \dot{\state}(t) &= \A \state(t) + \B \inpVar(t), \\
    \outputVar(t) &=  \Rvec (\inpVar(t) \otimes \state(t))+ \Pvec (\inpVar(t) \otimes \inpVar(t)).
  \end{aligned} \right.
\end{equation} 

Applying the Hölder inequality to a suitable adaptation of \eqref{eq:tobeadapted} we yield for the norm of the output of  $\system_{\Rvec, \Pvec}$ 
\begin{align*}
  \funNorm{\outVar}^2_{\timeSpacetwo{p}} &\le \left(\int_0^\infty \sum_{i=1}^p \vecNorm{\int_0^t \left(\Rm_i\mexp{\A \tau}\B  + \Pm_i \delta(\tau)\right)\inpVar(t-\tau)\dtau}_2^2 \dt \right) \left(\sup_{t\in [0,\infty)} \vecNorm{u}_2^2\right).
\end{align*} 
Now, one can apply standard $\Hinf$-results from \LTI theory for each of the outputs
\begin{align*}
  & \phantom{=}\vecNorm{\int_0^t \left(\Rm_i\mexp{\A \tau}\B  + \Pm_i \delta(\tau)\right)\inpVar(t-\tau)\dtau}_{\timeSpacetwo{m}}\le \funNorm{\Sigma_{\LTI}(\A, \B, \Rm_i, \Pm_i)}_{\Hinf} \funNorm{\inpVar}_{\timeSpacetwo{m}},
\end{align*}
where $\funNorm{\Sigma_{\LTI}(\A, \B, \Rm_i, \Pm_i)}_{\Hinf}$ denotes the $\Hinf$-norm of the \LTI system determined by $(\A, \B, \Rm_i, \Pm_i)$. Summarized this then yields 
\begin{equation*}
  \funNorm{\outVar}_{\timeSpacetwo{p}} \le \sqrt{\sum_{i=1}^p \funNorm{\Sigma_{\LTI}(\A, \B, \Rm_i, \Pm_i)}^2_{\Hinf}} \funNorm{u}_{\timeSpacetwo{m}} \funNorm{u}_{\timeSpaceinf{m}}.
\end{equation*}

\section{Linear, quadratic, bilinear and throughput outputs}
\label{sec:bilinear_and_throughput}

Finally, we can summarize the $\Ltwo$-$\Ltwo$-gain bound for \LQO systems \eqref{eq:base_ext} without quadratic throughput, i.e., $\Pvec = 0$, in the following theorem.
\begin{theorem}\label{thm:full}
  The $\Ltwo$-$\Ltwo$ gain of an \LQO system \eqref{eq:base_ext} with $\Pvec = 0$, can be bounded by 
  \begin{multline*}
    \funNorm{\outVar}_{\timeSpacetwo{\outVarDim}} \le \sup_{s \in \imagUnit \R} \sigma_{\max} \left[ \Cm^\T (s\I-\A)^{-1} \B + \D \right] \funNorm{ \inpVar}_{\timeSpacetwo{\inpVarDim}} \\ 
    + \frac{1}{\sqrt{2\pi}} \left( \sqrt{ \sum_{i=1}^p \funNorm{\Rm_i (s\I-\A)^{-1}\B}^2_{\freqSpacetwoC{m\times m}}}  + \funNorm{\widetilde \transferFunction_{\Mvec}(\cdot, -\cdot)}_{\freqSpaceFC{{p\times m^2}}}\right) \funNorm{\inpVar}_{\timeSpacetwo{\inpVarDim}}^2.
  \end{multline*}
  Here, we denote $\Rm_i = \vtf(\mathrm{e}_i^\T \Rvec)$,  $\sigma_{\max}$ refers to the largest singular value, and $\widetilde{\transferFunction}_{\Mvec}$ is given by \eqref{eq:tf_only_M}.
\end{theorem}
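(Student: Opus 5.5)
The bound follows from linearity of the output in its components together with the triangle inequality in $\timeSpacetwo{\outVarDim}$. Since $\Pvec=0$, the output of \eqref{eq:base_ext} splits as $\outVar = \outVar_{\mathrm{lin}} + \outVar_{\Rvec} + \outVar_{\Mvec}$. Here $\outVar_{\mathrm{lin}}(t) = \Cm\state(t)+\D\inpVar(t)$, $\outVar_{\Rvec}(t) = \Rvec(\inpVar(t)\otimes\state(t))$ and $\outVar_{\Mvec}(t)=\Mvec(\state(t)\otimes\state(t))$. All three terms are driven by the same state trajectory $\state$, which depends only on $(\A,\B)$ and $\inpVar$. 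Each summand is therefore exactly the output of one of the subsystems already treated: the \LTI system $(\A,\B,\Cm,\D)$, the system $\system_{\Rvec}$ in \eqref{eq:onlyR}, and the system $\system_{\Mvec}$ in \eqref{eq:base}. Minkowski's inequality gives
\begin{equation*}
  \funNorm{\outVar}_{\timeSpacetwo{\outVarDim}} \le \funNorm{\outVar_{\mathrm{lin}}}_{\timeSpacetwo{\outVarDim}} + \funNorm{\outVar_{\Rvec}}_{\timeSpacetwo{\outVarDim}} + \funNorm{\outVar_{\Mvec}}_{\timeSpacetwo{\outVarDim}},
\end{equation*}
and it remains to bound each term separately.

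\emph{Linear term.} I would use the classical result recalled in the introduction: for $\A$ Hurwitz, the $\Ltwo$-$\Ltwo$ gain of an \LTI system is bounded by the $\Hinf$-norm of its transfer function. This norm equals $\sup_{s\in\imagUnit\R}\sigma_{\max}$ of the transfer function, matching the first term of the claimed bound. The short justification is Parseval together with the pointwise estimate $\vecNorm{\outVarFreq(s)}_2\le\sigma_{\max}(\transferFunction(s))\vecNorm{\inpVarFreq(s)}_2$ on the imaginary axis. The printed $\Cm^\T$ should presumably read $\Cm$, given the dimensions in \eqref{eq:base_ext}.

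\emph{Bilinear term.} I would invoke \Cref{thm:onlyR} and take square roots. This gives $\funNorm{\outVar_{\Rvec}}\le \frac{1}{\sqrt{2\pi}}\bigl(\sum_i\funNorm{\Rm_i(s\I-\A)^{-1}\B}^2\bigr)^{1/2}\funNorm{\inpVar}^2$.

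\emph{Quadratic term.} I would apply \Cref{thm:central} directly, which gives $\funNorm{\outVar_{\Mvec}}\le\frac{1}{\sqrt{2\pi}}\funNorm{\widetilde\transferFunction_{\Mvec}(\cdot,-\cdot)}_{\freqSpaceFC{p\times m^2}}\funNorm{\inpVar}^2$.

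Adding the three estimates and factoring out $\frac{1}{\sqrt{2\pi}}\funNorm{\inpVar}^2$ from the last two yields the statement.

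The one point needing care is that the subsystem results can be applied to the summands as they sit inside the full system. This holds because $\state(0)=0$ in all cases and the state equation is identical, so $\outVar_{\Rvec}$ and $\outVar_{\Mvec}$ coincide with the outputs of $\system_{\Rvec}$ and $\system_{\Mvec}$ driven by the same input $\inpVar$. There are also no cross terms to control, since the triangle inequality is applied to norms and not to squared norms. I therefore expect no genuine obstacle beyond confirming these identifications and keeping the $\sqrt{2\pi}$ normalisations consistent across the three cited results.
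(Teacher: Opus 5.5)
Your argument is the paper's own proof step for step. The paper also applies the triangle inequality to $\outVar_{\mathrm{lin}}+\outVar_{\Rvec}+\outVar_{\Mvec}$, uses the $\Hinf$ bound for the linear part, \Cref{thm:onlyR} for the bilinear part and \Cref{thm:central} for the quadratic part. Your identification of the summands with subsystem outputs driven by the same state is fine, and $\Cm^\T$ should indeed read $\Cm$. The genuine problem is the quadratic step, which you, like the paper, take as a black box: \Cref{thm:central} only holds when every $\M_i$ is semidefinite. Its proof rests on \Cref{lem:tf_autocorrelation}. There, with $P(\sigma)=\sum_j g_j(\sigma)g_j(\sigma)\adj$, the H\"older step gives $\int\vecNorm{\unitary P(\sigma)}_F^2\dsigma=\int\tr(P\unitary^2P)\dsigma$. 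The proof then equates this with $\int\tr(P\unitary P\unitary)\dsigma=\int\vecNorm{\widetilde\transferFunction(\sigma,-\sigma)}_2^2\dsigma$. The two agree only if $\unitary$ commutes with $P(\sigma)$. For indefinite $\unitary$, the supremum over $s$ of the fibre norms is in general not attained at $s=0$.

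The statement itself is false in general. Take $n=2$, $m=p=1$, $\Cm=0$, $\D=0$, $\Rvec=0$, $\A=\begin{bmatrix}-1&0\\1&-1\end{bmatrix}$, $\B=\begin{bmatrix}1\\0\end{bmatrix}$ and $\M=\begin{bmatrix}0&2\\2&-4\end{bmatrix}$. Then $\outVar=4x_2(x_1-x_2)$ and $(s\I-\A)^{-1}\B=\bigl[(s+1)^{-1},\,(s+1)^{-2}\bigr]^\T$. With $a=1+\imagUnit\omega$ one finds $\widetilde\transferFunction(\imagUnit\omega,-\imagUnit\omega)=(2a+2\bar a-4)/|a|^4=0$ for all $\omega$, so the claimed right-hand side is $0$. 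Yet $\inpVar(t)=\mexp{-t}$ gives $x_1=t\mexp{-t}$, $x_2=\tfrac{t^2}{2}\mexp{-t}$ and $\outVar(t)=(2t^3-t^4)\mexp{-2t}\not\equiv 0$. The vanishing anti-diagonal only forces $\outVarFreq(0)=\int_0^\infty\outVar\dt=0$, which does hold here. So no argument along these lines can prove the theorem as printed.

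What your argument does prove is the version in which each $\M_i$ is semidefinite. In that case $\tr(P(\sigma)\unitary P(\sigma-s)\unitary)$ is a Frobenius product of the Hermitian matrices $|\unitary|^{1/2}P|\unitary|^{1/2}$ evaluated at $\sigma$ and $\sigma-s$, and Cauchy--Schwarz closes at $s=0$. For general symmetric $\M_i$ there are two ways to repair the quadratic constant:
\begin{itemize}
\item keep $\bigl(\sum_i\sup_{s\in\imagUnit\R}\funNorm{\widetilde\transferFunction_i(\cdot,s-\cdot)}_{\Ltwo}^2\bigr)^{1/2}$, which the proof of \Cref{thm:central} establishes before the lemma is invoked; or
\item split $\M_i=\M_i^+-\M_i^-$ into semidefinite parts and apply the triangle inequality once more, which gives the sum of the two anti-diagonal norms.
\end{itemize}
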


\begin{proof}
  We split the summand using the triangle inequality
  \begin{align*}
    \funNorm{\outVar}_{\timeSpacetwo{\outVarDim}} & = \funNorm{\outVar _{\mathrm{lin}} + \outVar _{\mathrm{\Mvec}}  +\outVar _{\mathrm{\Rvec}}}_{\timeSpacetwo{\outVarDim}} \\ &\le \funNorm{\outVar _{\mathrm{lin}}}_{\timeSpacetwo{\outVarDim}} + \funNorm{\outVar _{\Mvec} }_{\timeSpacetwo{\outVarDim}} + \funNorm{\outVar _{\Rvec} }_{\timeSpacetwo{\outVarDim}},
  \end{align*}
  where the summands are given by 
  \begin{align*}
    \outVar_{\mathrm{lin}}(t) &= \int_0^t \left[ \Cm^\T \mexp{\A \tau} \B + \D \delta(\tau) \right] \inpVar(t-\tau) \dtau, \\
    \outVar_{\Mvec}(t) &= \int_0^t  \int_0^t \left[\Mvec \left( \mexp{\A \tau_1} \B \otimes \mexp{\A \tau_2} \B \right) \right] \cdot [\inpVar(t-\tau_1) \otimes \inpVar(t-\tau_2)] \dtau_1 \dtau_2.\\
    \outVar_{\Rvec}(t) &= \int_0^t  \int_0^t \left[\Rvec\left( \mexp{\A \tau_1} \B \otimes \I\delta(\tau_2)\right) \right] \cdot [\inpVar(t-\tau_1) \otimes \inpVar(t-\tau_2)] \dtau_1 \dtau_2.
  \end{align*}
  For the linear part the relationship 
  \begin{equation}
    \funNorm{ \outVar_{\mathrm{lin}} }_{\timeSpacetwo{\outVarDim}} \le \sup_{\sigma \in \imagUnit \R} \sigma_{\max} \left[ \Cm^\T (\sigma\I-\A)^{-1} \B + \D \right]  \funNorm{\inpVar}_{\timeSpacetwo{\outVarDim}} \label{eq:CD}
  \end{equation}
  is well established in literature, see \cite{Ant05}. The quadratic component is bounded by \Cref{thm:central} and the bilinear component by \Cref{thm:onlyR}. Summing the contributions yields the desired result.
\end{proof}

\section{Example}
\label{sec:example}

We illustrate the new bound with a small numerical example of state dimension $\stateDim = 3$
\begin{equation}\label{eq:example}
  \A = \begin{bmatrix}
    -1 & 2 &0\\-1 & -2 & 1 \\ 0 &-3 & -4
  \end{bmatrix}, \quad \B =\begin{bmatrix}
    1 \\ 0 \\ 1
  \end{bmatrix}, \quad \Mvec = \vecc\left(\begin{bmatrix}
    1 & 0 & 0 \\ 0 & 1 & 0 \\ 0 & 0 & 1
  \end{bmatrix}\right)^\T.
\end{equation}
From here, we can compute the gain bound using the Sylvester-equation formulation \Cref{thm:lyap_formulation}
\begin{equation*}
  \funNorm{\outVar}_{\timeSpacetwo{}} \le 0.491 \funNorm{\inpVar}_{\timeSpacetwo{}}.
\end{equation*}
Using the control systems toolbox \cite{FulGMMP21} in \abbr{Python} we also computed the $\Ltwo$-$\Ltwo$ gain $\gamma \coloneqq \tfrac{\funNorm{\outVar}_{\timeSpacetwo{}}}{\funNorm{\inpVar}_{\timeSpacetwo{}}}$ for various inputs as listed in \Cref{tab:gains} and can thereby validate the gain bound for this simple example and the given inputs.
\begin{table}[]
  \begin{tabular}{ll||ll}
  Input $\inpVar(t)$ & Gain $\gamma$ & Input $\inpVar(t)$ & Gain $\gamma$  \\ \hline
  $\mexp{-t}$ & 0.284 & $\mexp{-t}- \mexp{-4t}$ &  0.324\\
  $\mexp{-3t}$ & 0.210 & $\mexp{-(t-3)^2}$&  0.361\\
  $\mexp{-0.3t}\sin(2t)$&0.214  & $\sin(\pi\chi_{[0,1]}(t))$  & 0.264\\
  $\mexp{-0.5t}\sin(5t)$& 0.047 & $\mexp{-0.4 t}  \sin(2.5 t^2)$ & 0.130
  \end{tabular}
  \label{tab:gains}
  \caption{$\Ltwo$-$\Ltwo$-gains of \eqref{eq:example} computed for various inputs.}
\end{table}

\section{Conclusion}
\label{sec:discussion}

We have established three main results.  First, we showed that the $\Ltwo$-$\Ltwo$ gain of a purely quadratic-output system can be written as a $\Ltwo$-norm of the bivariate transfer function evaluated on the anti-diagonal 
\(\mathcal D=\{(s,\,-s)\mid s\in i\mathbb R\}\).  
The second result showed that this norm can be computed by solving a small set of Lyapunov and Sylvester equations, thus avoiding any numerical integration over the frequency axis. The validity of the bound was illustrated by a small numerical example. 
Third, we showed that the full $\Ltwo$-$\Ltwo$-gain bound can be written in terms of the previous results, the $\Hinf$-norm of the transfer function of the linear part, and a term resembling the $\Htwo$-norm of an \LTI system for the bilinear part. The quadratic throughput term can therein only be included if one considers an $\Ltwo\cap\Linfty$-$\Ltwo$-gain bound.

The first two results suggest that $\Htwo$-oriented \MOR algorithms from the \LTI setting, such as $\IRKA$, can be adapted for quadratic-output models yielding an optimal reduced order model in the sense that it minimizes the $\Ltwo$-error of the output relative to the $\Ltwo$-magnitude of the input. In turn, an adaptation of balanced truncation that yields a similar a priori bound as for the \LTI case remains elusive since the inner product formulation is highly involved and does not allow for an immediate identification of a suitable Gramian related to the observability energy. 

\vspace*{-0.1cm}
\removeAuthorData{
\subsection*{Acknowledgements} The author thanks Prof.~Benjamin Unger and Mattia Manucci for a careful reading of the draft and for the valuable comments that improved the presentation of the results. The author acknowledges funding by the Deutsche Forschungsgemeinschaft (DFG, German Research Foundation) – Project-ID 258734477 – SFB 1173 and by the International Max Planck Research School for Intelligent Systems (IMPRS-IS). 
}

\vspace*{-0.12cm}
\subsection*{Declaration of generative AI and AI-assisted technologies in the manuscript preparation process} During the preparation of this work the author used GPT-OSS 120B hosted at Karlsruhe Institute of Technology in order to improve grammar and readability. After using this tool, the author reviewed and edited the content as needed and takes full responsibility for the content of the published article.


\bibliographystyle{_plain-doi} 
\bibliography{extracted.bib}

@article{HolNSU25,
  author =        {Holicki, Tobias and Nicodemus, Jonas and
                   Schwerdtner, Paul and Unger, Benjamin},
  journal =       {{SIAM} J. Cont. Optim.},
  month =         jun,
  number =        {3},
  pages =         {2154--2176},
  publisher =     {Society for Industrial and Applied Mathematics},
  title =         {Energy Matching in Reduced Passive and
                   Port-{Hamiltonian} Systems},
  volume =        {63},
  year =          {2025},
  doi =           {10.1137/23m1600931},
  issn =          {1095-7138},
}

@article{EngW08,
  author =        {Engwerda, Jacob and Weeren, Arie},
  journal =       {Automatica},
  number =        {1},
  pages =         {265-271},
  title =         {A result on output feedback linear quadratic control},
  volume =        {44},
  year =          {2008},
  doi =           {10.1016/j.automatica.2007.04.025},
  issn =          {0005-1098},
}

@article{HaaUW13,
  author =        {Haasdonk, Bernard and Urban, Karsten and
                   Wieland, Bernhard},
  journal =       {{SIAM}-{ASA} J. Uncertain. Quantif.},
  number =        {1},
  pages =         {79--105},
  publisher =     {Society for Industrial and Applied Mathematics},
  title =         {Reduced basis methods for parameterized partial
                   differential equations with stochastic influences
                   using the {Karhunen--Lo{e`}ve} expansion},
  volume =        {1},
  year =          {2013},
  doi =           {10.1137/120876745},
}

@article{Sch92,
  author =        {van der Schaft, Arjan},
  journal =       {{IEEE} Trans. Automat. Control},
  number =        {6},
  pages =         {770--784},
  publisher =     {{IEEE}},
  title =         {$L_2$-gain analysis of nonlinear systems and
                   nonlinear state feedback {$H_\infty$} control},
  volume =        {37},
  year =          {1992},
  doi =           {10.1109/9.256331},
}

@book{ZhoDG96,
  author =        {Zhou, Kemin and Doyle, John C. and Glover, Keith},
  publisher =     {Prentice-Hall},
  title =         {Robust and optimal control},
  year =          {1996},
  isbn =          {0134565673},
  url =           {https://dl.acm.org/doi/book/10.5555/225507},
}

@book{BenSGQR21,
  address =       {Berlin/Boston},
  author =        {Peter Benner and Wil Schilders and
                   Stefano Grivet-Talocia and Alfio Quarteroni and
                   Gianluigi Rozza and Luís Miguel Silveira},
  number =        {1},
  publisher =     {De Gruyter},
  series =        {Model Order Reduction},
  title =         {Volume 1: System- and Data-Driven Methods and
                   Algorithms},
  year =          {2021},
  isbn =          {9783110497717},
}

@article{BenGD21,
  author =        {Benner, Peter and Goyal, Pawan and Duff, Igor Pontes},
  journal =       {{IEEE} Trans. Automat. Control},
  number =        {2},
  pages =         {886--893},
  publisher =     {{IEEE}},
  title =         {Gramians, Energy Functionals, and Balanced Truncation
                   for Linear Dynamical Systems With Quadratic Outputs},
  volume =        {67},
  year =          {2021},
  doi =           {10.1109/TAC.2021.3086319},
}

@article{ReiGDG25,
  author =        {Reiter, Sean and Gosea, Ion Victor and
                   Duff, Igor Pontes and Gugercin, Serkan},
  journal =       {arXiv e-print 2505.03057},
  title =         {$\mathcal{H}_2$-optimal model reduction of linear
                   quadratic-output systems by multivariate rational
                   interpolation},
  year =          {2025},
  doi =           {10.48550/ARXIV.2505.03057},
}

@article{DiaHGA23,
  author =        {Diaz, Alejandro N. and Heinkenschloss, Matthias and
                   Gosea, Ion Victor and Antoulas, Athanasios C.},
  journal =       {Adv. Comput. Math.},
  number =        {6},
  pages =         {95},
  publisher =     {Springer-Verlag},
  title =         {Interpolatory model reduction of quadratic-bilinear
                   dynamical systems with quadratic-bilinear outputs},
  volume =        {49},
  year =          {2023},
  doi =           {10.1007/s10444-023-10096-2},
  isbn =          {1572-9044},
}

@article{Deb15,
  author =        {Debnath, Lokenath},
  journal =       {Internat. J. Appl. Comput. Math.},
  month =         apr,
  number =        {2},
  pages =         {223--241},
  publisher =     {Springer-Verlag},
  title =         {The Double {Laplace} Transforms and Their Properties
                   with Applications to Functional, Integral and Partial
                   Differential Equations},
  volume =        {2},
  year =          {2015},
  doi =           {10.1007/s40819-015-0057-3},
  issn =          {2199-5796},
}

@article{CheC73,
  author =        {Chen, Chihfan and Chiu, R. F.},
  journal =       {Internat. J. Syst. Sci.},
  month =         jul,
  number =        {4},
  pages =         {647--660},
  publisher =     {Informa UK Limited},
  title =         {New theorems of association of variables in multiple
                   dimensional {Laplace} transform},
  volume =        {4},
  year =          {1973},
  doi =           {10.1080/00207727308920045},
  issn =          {1464-5319},
}

@article{ZhaW15,
  author =        {Zhang, Yang and Wong, Ngai},
  journal =       {Int J. Circ. Theor. Appl.},
  month =         nov,
  number =        {7},
  pages =         {1367--1384},
  publisher =     {Wiley},
  title =         {Compact model order reduction of weakly nonlinear
                   systems by associated transform},
  volume =        {44},
  year =          {2015},
  doi =           {10.1002/cta.2165},
  issn =          {1097-007X},
}

@misc{PetP12,
  author =        {Petersen, Kaare Brandt and Pedersen, Michael Syskind},
  month =         {nov},
  note =          {Version 20121115},
  publisher =     {Technical University of Denmark},
  title =         {The Matrix Cookbook},
  year =          {2012},
  url =           {http://www2.imm.dtu.dk/pubdb/p.php?3274},
}

@article{Wim16,
  author =        {Wimmer, Harald K.},
  journal =       {Linear Algebra Appl.},
  month =         mar,
  pages =         {537--543},
  publisher =     {Elsevier},
  title =         {Contour integral solutions of {Sylvester}-type matrix
                   equations},
  volume =        {493},
  year =          {2016},
  doi =           {10.1016/j.laa.2015.12.027},
  issn =          {0024-3795},
}

@book{Ant05,
  author =        {Antoulas, Athanasios C.},
  publisher =     {Society for Industrial and Applied Mathematics},
  title =         {Approximation of large-scale dynamical systems},
  year =          {2005},
  doi =           {10.1137/1.9780898718713},
}

@inproceedings{FulGMMP21,
  author =        {Fuller, Sawyer and Greiner, Ben and Moore, Jason and
                   Murray, Richard and van Paassen, Rene and
                   Yorke, Rory},
  booktitle =     {2021 60th IEEE Conference on Decision and Control
                   (CDC)},
  month =         dec,
  pages =         {4875--4881},
  publisher =     {IEEE},
  title =         {The {Python} Control Systems Library
                   (python-control)},
  year =          {2021},
  doi =           {10.1109/cdc45484.2021.9683368},
}

@book{HorJ91,
  author =        {Horn, Roger A. and Johnson, Charles R.},
  month =         apr,
  publisher =     {Cambridge University Press},
  title =         {Topics in Matrix Analysis},
  year =          {1991},
  doi =           {10.1017/cbo9780511840371},
  isbn =          {9780511840371},
}


\appendix
\section{Proof of \Cref{lem:partial-fraction-decomposition}}
\label{app:partial-fraction}

\begin{proof}
  Let the eigenvalues of $\A$ be given by $\lambda_1, ..., \lambda_\stateDim$, then every eigenvalue $\lambda^\oplus_k$ for $k \in \{1,..., n^2\}$ of $\I \otimes \A + \A \otimes \I$ can be written as $\lambda^\oplus_k = \lambda_i + \lambda_j $ for $i,j \in \{1,...,n\}$ \cite[4.4.5]{HorJ91}. Hence, the matrix $\lambda$ of $\I \otimes \A + \A \otimes \I$ is Hurwitz and thus invertible. 
  Computing then yields 
  \begin{align*}
    &\phantom{=} \left[\I \otimes \A + \A \otimes \I\right] \left[(s\I - \A)^{-1} \otimes (-s \I - \A)^{-1}\right]  \\ 
    &= \left[(s \I - \A)^{-1} \otimes  (-s\I -\A)^{-1}\A\right] + \left[ (s \I - \A)^{-1} \A\otimes  (-s\I -\A)^{-1}\right]\\ 
    &= -\left[(s \I - \A)^{-1} \otimes  (-s\I -\A)^{-1}(-\A-s\I + s\I) \right] \\ &\phantom{=}- \left[ (s \I - \A)^{-1} (-\A-s\I+s\I)\otimes  (-s\I -\A)^{-1}\right]\\ 
    &= - s\left[(s \I - \A)^{-1} \otimes (-s\I -\A)^{-1}\right] - \left[(s\I-\A)^{-1} \otimes \I \right] \\ &\phantom{=} + s \left[(s \I - \A)^{-1} \otimes (-s\I -\A)^{-1}\right] - \left[\I \otimes (s\I + \A)^{-1}\right]\\
    &= -(s\I - \A)^{-1} \otimes \I - \I \otimes (-s \I - \A)^{-1}. \qedhere
  \end{align*}
\end{proof}

%
%
%

\end{document}